\begin{document}
\title[Hyperfields and dequantization]{Hyperfields for Tropical 
Geometry I. Hyperfields and dequantization}
\author{Oleg Viro}
\begin{abstract} 
New hyperfields, that is fields in which addition is multivalued, are
introduced and studied. In a separate paper these hyperfields are 
shown to provide a base for the tropical geometry.
The main hyperfields considered here are classical number sets,
such as the set $\C$ of complex numbers, the set $\R$ of real numbers, 
and the set $\R_+$ of real non-negative numbers,
with the usual multiplications, but new, multivalued additions.
The new hyperfields are related with the classical fields and each other
by dequantisations. For example,
the new complex tropical field $\tc$ is a dequantization of the field 
$\C$ of complex numbers.
\end{abstract}
\maketitle

\section{Introduction}\label{s1}
\subsection{Natural, but not well-known}\label{s1.1}
This paper is devoted to hyperfields. The notion of hyperfield is an 
immediate generalization of the notion of field. 
A hyperfield is just a field, in which the addition is multivalued.
Hyperfields are very natural and useful algebraic objects. 

However, still, they have to find their way to the mainstream mathematics:
still, it is easier to re-invent them than to find out that they have been 
invented. 

When I realized that objects of this kind would be
very useful in my efforts to find an appropriate base for tropical 
geometry, it was not difficult to build up a basic theory around my
examples, but it took much longer to find the theory in literature. 

I gave several talks about this matter (in particular, a talk
\cite{Viro_msri} at MSRI workshop {\em Tropical Structures in Geometry 
and Physics\/} on November 30, 2009). The
talks were attended by many mathematicians, but nobody told me about 
acquaintance with generalized fields having a multivalued addition. 
I am very grateful to Anatoly Vershik: when he listened to my talk in mid
January, 2010, he told me that there were many papers devoted to multigroups
and the likes. I found by Google a paper \cite{Marshall} of 2006 
by Murray Marshall, where multiring and multifields were defined.
 
In the introduction to \cite{Marshall} Marshall wrote: "The idea of a 
multiring is very natural, although there seems to be no reference to it 
in the literature. Some basic properties of multigroups and multirings are
established in Sections 1 and 2." In Section 2 Marshall defined also a 
multifield. 

I have changed a draft of my paper, replacing my term "tropical field" 
by Marshall's "multifield" and uploaded it to arXiv as \cite{Viro_mf}. 
Of course, I contacted Marshall and expressed to him my excitement about 
multifields. 

A couple of months later Marshall informed me that he learned from recent
preprints \cite{CC1}, \cite{CC2} of Alain Connes and Caterina Consani 
that our multifields under the name of hyperfileds were introduced as early
as in 1956 by Marc Krasner  \cite{Krasner1}. Krasner \cite{Krasner1}
introduced also hyperrings, but the notion of multiring
introduced by Marshall \cite{Marshall} is more general than the notion
of hyperring considered by Krasner, and the difference is essential for 
the applications that Marshall developed. 
Therefore both terms will be used. But Krasner's hyperfield and  
Marshall's multifield are absolutely the same, and the term hyperfield 
wins as it is older. 

This paper is a new version of \cite{Viro_mf}. I insert the corresponding
correction of references and terminology and make a few new remarks inspired
by new information that I got from the papers \cite{CC1}, \cite{CC2} of 
Alain Connes and Caterina Consani. 

Krasner, Marshall, Connes and Consani and the author came to hyperfields
for different reasons, motivated by different mathematical problems, 
but we came to the same 
conclusion: the hyperrings and hyperfields are great, very useful
and very underdeveloped in the mathematical literature.

Probably, the main obstacle for hyperfields to become a mainstream notion 
is that a multivalued operation does not fit to the tradition of
set-theoretic terminology, which forces to avoid multivalued maps 
at any cost. 

I believe the taboo on multivalued maps has no real ground, and eventually 
will be removed. Hyperfields, as well as multigroups, hyperrings and 
multirings, are   
legitimate algebraic objects related in many ways to the classical core of
mathematics. They provide elegant terminological and conceptual 
opportunities. In this paper I try to present new evidences for this.

I rediscovered hyperfields in an attempt \cite{Viro_msri} to find a true  
algebraic background of the tropical geometry. I believe  
hyperfields are to displace the tropical semifield in the tropical
geometry. They suit the role better.
In particular, with hyperfields the varieties are defined by equations, 
as in other branches of algebraic geometry. 

\subsection{Results}\label{s1.2}
The main results of this paper are 
new examples of hyperfields. The underlying sets of these hyperfields 
are classical: the set $\C$ of all complex numbers, the set $\R$ of all 
real numbers, the set $\R_+$ of non-negative real numbers. Multiplication
also is the usual multiplication of numbers. The additions are new
multivalued operations. 

These hyperfields are related to each other and the classical fields 
by hyperfield homomorphisms, and also via  
degenerations of the structures, similar to the Litvinov-Maslov
dequantization \cite{L-M}, which relates the semifield $(\R_+,+,\times)$
of non-negative real numbers with the usual arithmetic operations to 
the tropical semifield $\T=(\R\cup\{0\},\max,+)$. 
In particular, the fields $\C$ and $\R$ are dequantized. 
I call the results complex tropical hyperfield $\tc$ and real tropical
hyperfield $\tr$.
 
A new hyperfield that does not appear via dequantizing a field, is a
{\sfit triangle hyperfield\/} $\mftr$. Its underlying set is $\R_+$, and
the addition is related to the triangle inequality: the sum of two
non-negative numbers $a$ and $b$ is defined as the set of non-negative
numbers $c$ such that there exists an Euclidean triangle with sides of
lengths $a$, $b$ and $c$. This hyperfield dequantizes to a similar
hyperfield $\mfutr$ in which addition is related in the same way with 
the ultra-metric triangle inequality $c\le\max(a,b)$.   

Applications of the hyperfields introduced in this paper 
to the tropical geometry will be presented in a separate papers
\cite{Viro_eqmfd} and 
\cite{Viro_tg}, a preliminary exposition can be found in \cite{Viro_bntg}.

\subsection{Organization of the paper}\label{s1.3}
Section \ref{s2} is devoted to the general multivalued algebra. 
It starts with a discussion of the terminology related to 
multivalued maps. Then multivalued binary operations are discussed. 

In Section \ref{s3}, the notions related to 
multivalued generalizations of groups are discussed. This
discussion is not complete, due to long history and a huge number of
various level of the generalizations. We concentrate mainly on the notions
needed to what follows.

In Section \ref{s4} we turn to multirings, hyperrings and hyperfields, their
examples and general properties. Section \ref{s4} finishes with a 
discussion of multiring homomorphisms, their examples and first
applications.

In Section \ref{s5} a few 
hyperfields related to triangle inequalities are introduced
(triangle, ultra-triangle, tropical and amoeba hyperfields).

In Section \ref{s6} we introduce tropical addition of complex numbers 
and discuss its properties. In Section \ref{s7} subhyperfields of the 
complex tropical hyperfield are considered.

In Section \ref{s9} the dequantization are considered. We start with
the Litvinov-Maslov dequantization, then study dequantization 
of the triangle hyperfield to the ultratriangle one, and 
dequantization of the field $\C$ to the complex tropical hyperfield.  
All the dequantizations are related to each other at the end of Section
\ref{s9}.

\subsection{Acknowledgemnts}\label{s1.4}
The research presented in this paper was partly made when the author 
participated in a semester program on tropical geometry at MSRI. I am
grateful to MSRI for an excellent environment and opportunity of direct 
contact with the leading specialists in tropical geometry. I am grateful to 
G.~Mikhalkin, Ya.~Eliashberg, V.~Kharlamov, I.~Itenberg, L.~Katsarkov, 
I.~Zharkov, A.~M.~Vershik, M.~Marshall, A.~Connes and C.~Consani 
for useful discussions. 

\tableofcontents

\section{Multivalued maps and operations}\label{s2}

Multivalued operations hardly belong to the mainstream of  conventional 
mathematics, but they appear here and there. In this section the basic 
terminology related to multivalued maps is introduced and discussed. 

\subsection{Multivalued mappings}\label{s2.1}
For a set $X$, the symbol $2^X$ denotes the set of all subsets of $X$.
A {\sfit multivalued map\/} or {\sfit multimap\/} of a set $X$ to a set $Y$ 
is a map 
$X\to2^Y$, which is treated for some reasons as a map $X\to Y$ 
that does not satisfy the usual requirement of being univalent
(according to this requirement a map must take each element of $X$ 
to a {\sfit single\/} element of $Y$). 

The reason for considering a multivalued map is usually a desire to 
emphasize an analogy to another
situation, in which the corresponding map is univalued.  In this paper we study
a generalization of addition with the sum allowed to be multivalued. 
Usage of the modern set-theoretic terminology would make analogies with the
usual addition more difficult to recognize. Cf., for example,
\cite{Marshall}, where a multivalued binary operation is introduced,
according to the standards of set theory, as a
subset of the Cartesian cube of the underlying set, 
but a couple of pages after that the multivalued notation take over, anyway. 
Therefore we dare to use less conventional terminology of multivalued maps.   

The term {\sfit set-valued\/} is used as a
synonym for multivalued. A multivalued map $f$ of $X$ to $Y$ is denoted
by  $f:X\multimap Y$. 

\subsection{Adjustment of terminology}\label{s2.2}
As in other cases of disrespect towards the standards of set-theoretic
terminology, this one implies a whole chain of modifications of commonly
accepted terminology and  notation. 
Some of the modifications are straightforward 
and cannot lead to a confusion. For example, the {\sfit value\/} $f(a)$ 
at $a\in X$ 
is the subset of $Y$ which is the image of $a$ under the corresponding map 
$X\to 2^Y$. What happens to the notion of the {\sfit image\/} of a set is less 
logical, but still easy to guess: 
for $A\subset X$, the symbol $f(A)$ denotes not the subset
$\{f(x)\mid x\in A\}$ of $2^Y$, but the subset $\cup_{x\in A}f(x)$ of  $Y$.

In the same spirit: the {\sfit composition\/} of multivalued 
maps $f:X\multimap Y$ 
and $g:Y\multimap Z$ is a multimap $g\circ f:X\multimap Z$ 
that takes  $a\in X$ to $g(f(a))=\cup_{y\in f(a)}g(y)$. 

Other modifications may be quite confusing. For example, what is the
preimage of a set $B\subset Y$ under a multivalued map $f:X\multimap Y$?
The set $\{a\in X\mid f(a)\subset B\}$ or $\{a\in X\mid f(a)\cap
B\ne\varnothing\}$? We see that the notion of the preimage of a set splits
under the transition from univalued maps to multivalued ones. In cases of such
ambiguity one needs to adjust the terminology. For example, 
the set   $\{a\in X\mid f(a)\subset B\}$ is called the {\sfit upper
preimage\/} of $B$ under $f$ and denoted by $f^+(B)$, while
$\{a\in X\mid f(a)\cap B\ne\varnothing\}$ is called the {\sfit lower
preimage\/} of $B$ under $f$ and denoted by $f^-(B)$. The names seems to 
be confusing because $f^+(B)\subset f^-(B)$, the upper preimage is smaller  
than the lower one.  
 
In order to take refuge in the 
standard set-theoretic terminology, we will pass from a multivalued map 
$f:X\multimap Y$ to the corresponding univalued map $X\to2^Y$. The latter
will be denoted by $f^\uparrow$.

Sets, multimaps and their compositions form a category. Thus, although
multivalued maps do not quite comply with the set-theoretic terminology, 
they fit comfortably to a more modern category-theoretic setup.    

\subsection{Multivalued binary operations}\label{s2.3} 
A multivalued map $X\times X\multimap X$ with non-empty values 
is called a {\sfit binary multivalued operation\/} in $X$.

A binary multivalued operation $f:X\times X\multimap X$ is said to be  
{\sfit commutative\/} if $f(a,b)=f(b,a)$ for any $a,b\in X$.

A binary multivalued operation $f:X\times X\to 2^X$ is said to be  
{\sfit associative\/} if $f(f(a,b),c)=f(a,f(b,c)$ for any $a,b,c\in X$.
Certainly, in the latter formula, by $f$ we mean not only $f$, but also 
its natural extension to all subsets of  $X$, that is 
$$2^X\times 2^X\to2^X:(A,B)\mapsto\bigcup_{a\in A,b\in B}f(a,b).$$

Let $Y\subset X$ and $f:X\times X\multimap X$ be a multivalued binary
operation. A multivalued binary operation  $g:Y\times Y\multimap Y$ is said
to be {\sfit induced \/} by $f$, if $g(a,b)=f(a,b)\cap Y$ for any  
$a,b\in Y$. Of course, the induced operation is completely determined by
the original one. It exists iff  $f(a,b)\cap Y\ne\varnothing$ 
for any  $a,b\in Y$ (recall that according to the definition of a multivalued 
operation the set  $g(a,b)$ is not allowed to be empty). 

\section{Hypergroup-Multigroups-Polygroups}\label{s3}

\subsection{Definition of multigroup}\label{s3.1}
A set $X$ with a {\it multivalued\/} binary operation 
$(a,b)\mapsto a\cdot b$ 
is called a {\sfit multigroup} if 
\begin{enumerate} 
\item the operation $(a,b)\mapsto a\cdot b$ is associative;
\item $X$ contains an element $1$ such that $1\cdot a=a=a\cdot 1$ for 
any $a\in X$;
\item for each $a\in X$ there exists a unique $b\in X$ such that
$1\in a\cdot b$ and there exists a unique $c\in X$ such that 
$1\in c\cdot a$. Furthermore, $b=c$. This element is denoted by $a^{-1}$. 
\item $c\in a\cdot b$ iff $c^{-1}\in b^{-1}\cdot a^{-1}$ for any $a,b,c\in X$.
\end{enumerate}
This is a straightforward generalization of the notion of group:
any group is a multigroup and 
a multigroup, in which the group operation is univalued 
(i.e., $a\cdot b $ consists of a single element) is a group.

The axioms of multigroup presented above are not minimal. I have chosen 
them, because they give a good idea what is the structure and why
multigroups generalize groups. To my taste, they are convenient if you
know already that you deal with a multigroup and want to deduce something
from axioms.
For checking if something is a multigroup, a more minimalistic set of 
axioms, like the one provided by following theorem,  may serve better.  

\begin{Th}[Cf. Marshall \cite{Marshall}]\label{th-reform-multigroup}
A set $X$ with a multivalued operation $(a,b)\mapsto a\cdot b$ is a
multigroup iff there exist a map $X\to X:a\mapsto a^{-1}$ and an element
$1\in X$ such that:
\begin{enumerate} 
\item[(1')] $(a\cdot b)\cdot c\subset a\cdot(b\cdot c)$ for any $a,b,c\in X$,
\item[(2')] $a\cdot 1=a$ for any $a\in X$,
\item[(3')] {\sfit Reversibility property.} $c\in a\cdot b$ implies 
$a\in c\cdot b^{-1}$ and $b\in a^{-1}\cdot c$ for $a,b,c\in X$.
\end{enumerate}
\end{Th}

\begin{proof} First, let us prove that statements (1') - (3') hold true in
any multigroup. Obviously, (1') follows from axiom (1), and (2') follows
from axiom (2). Let us prove (3').

If  $c\in a\cdot b$, then $1\in (a\cdot b)\cdot c^{-1}=a\cdot(b\cdot
c^{-1})$. By axiom (3), $a^{-1}$ is the unique element
$x$ such that $1\in ax$. Therefore $a^{-1}\in b\cdot c^{-1}$. By axiom (4), 
$a^{-1}\in b\cdot c^{-1}$ iff $a\in c\cdot b^{-1}$.
Thus, we proved that  $c\in a\cdot b$ implies $a\in c\cdot b^{-1}$. The
other implication is proved similarly: $c \in a\cdot b$ implies 
$1\in c^{-1}\cdot(a\cdot b)=(c^{-1}\cdot a)\cdot b$,
therefore $b^{-1}\in c^{-1}\cdot a$. Hence $b\in a^{-1}\cdot c$.

Now let us deduce the axioms (1) - (4) from (1') - (3'). 

First, observe that $1^{-1}=1$. Indeed, $1\cdot 1=1$ by (2'), 
hence $1\in 1^{-1}\cdot1$ by (3'), and $1^{-1}\cdot1=1^{-1}$ by (2').

Second, observe that $1\in a^{-1}\cdot a$. Indeed, $a\in a\cdot1$ by (2'),
and by applying (3') we get $1\in a^{-1}\cdot a$.

Now let us prove that the map $X\to X:a\mapsto a^{-1}$ is an
involution, that is $(a^{-1})^{-1}=a$ for any $a\in X$. We have just proved
that $1\in a^{-1}\cdot a$. By (3'), it follows $a\in(a^{-1})^{-1}\cdot1$,
but by (2') $(a^{-1})^{-1}\cdot1=(a^{-1})^{-1}$.    

Now let us deduce axiom (4). Assume that $c\in a\cdot b$. By (3') this
implies that  $a\in c\cdot b^{-1}$. Again by (3'), this implies
$b^{-1}\in c^{-1}\cdot a$. Finally, again by (3'), this implies
$c^{-1}\in b^{-1}\cdot a^{-1}$. The opposite implication 
$c^{-1}\in b^{-1}\cdot a^{-1}\implies c\in a\cdot b$ follows from the one
that we proved by substituting $a^{-1}$ for $a$, $b^{-1}$ for $b$ and
$c^{-1}$ for $c$ and using the fact the $a\mapsto a^{-1}$ is an involution.

Now let us deduce (2). One of the two equalities constituting (2) is (2').
The other one follows from (2'), (4) and the fact that $a\mapsto a^{-1}$ is
a bijection (as an involution).

In order to deduce (3), we need to prove that from (1') - (3') it follows
that $1\in a\cdot b$ implies $b=a^{-1}$ and $a=b^{-1}$. Apply (3') to
$1\in a\cdot b$. It gives $a\in 1\cdot b^{-1}$ and $b\in a^{-1}\cdot 1$.
By (2) this implies  $b=a^{-1}$ and $a=b^{-1}$.
 
In order to prove (1), we need to prove the inclusion  
$(a\cdot b)\cdot c\supset a\cdot(b\cdot c)$. By (1'), 
$(c^{-1}\cdot b^{-1})\cdot a^{-1}\subset c^{-1}\cdot(b^{-1}\cdot a^{-1})$.
Applying the involution $x\mapsto x^{-1}$ to both sides of this
inclusion, we obtain
$\left((c^{-1}\cdot b^{-1})\cdot a^{-1}\right)^{-1}\subset 
\left(c^{-1}\cdot(b^{-1}\cdot a^{-1})\right)^{-1}$. 
Then, applying axiom (4), which has already been deduced above, we obtain  
$a\cdot (b\cdot c)\subset (a\cdot b)\cdot c$.
\end{proof} 

Notice, that in the proof of Theorem \ref{th-reform-multigroup}
we proved that in any multigroup  $1^{-1}=1$ and $(a^{-1})^{-1}=a$.

\subsection{Notation}\label{s3.2}
In what follows we meet mostly {\sfit commutative\/} multigroups.
Then we will use an additive notation: the neutral element $1$ will be 
denoted by $0$, the element $a^{-1}$ will be denoted by $-a$, 
the multigroup operation will be denoted by various symbols such as 
$\tplus$, $\cplus$, $\yplus$, $\nplus$.
We use these symbols (instead of commonly used $+$), because the 
multivalued operations  
will be considered below in an environment where
the usual addition $(a,b)\mapsto a+b$ is also present, and, moreover, two 
multivalued additions may be considered simultaneously.

\subsection{The smallest multigroup}\label{s3.3} The  smallest
multigroup which is not a group: in the set $\{0,1\}$ 
define an operation $\yplus$  by formulas:
$0\yplus 0=0$, $0\yplus 1=1=1\yplus 0$, $1\yplus 1=\{0,1\}$.
One can easily check that this is a multigroup.
Following Marshall \cite{Marshall}, we denote this multigroup by $Q_1$. 
This is the only multigroup of two elements that is not a group. 

\subsection{Multigroups of a linear order}\label{s3.4}
$Q_1$ belongs to a family of multigroups defined by linearly ordered sets.
Let $X$ be a linearly ordered set with order $\prec$ and an element $0$
such that $0\prec x$ for any $x\in X$ different from $0$. 
Define in $X$ a binary multivalued operation 
$$
(a,b)\mapsto a\yplus b=
\begin{cases} 
\max(a,b), &\text{ if } a\ne b\\
\{x\in X\mid x\preceq a\}, &\text{ if }a=b.
\end{cases}
$$
It is easy to verify that $X$ with $\yplus$ is a multigroup and 
$-a=a$ for any $a\in X$.

 This construction gives $Q_1$ if $X=\{0,1\}$ and $0\prec1$.

In the same situation $X$ can be turned into a different multigroup.
For this, define a binary multivalued operation
$$
(a,b)\mapsto a\upspoon b=
\begin{cases} 
\max(a,b), &\text{ if } a\ne b\\
\{x\in X\mid x\prec a\}, &\text{ if }a=b\ne0\\
0, &\text{ if }a=b=0.
\end{cases}
$$
It is easy to verify that $X$ with $\upspoon$ is a multigroup and 
$-a=a$ for any $a\in X$. For $X=\{0,1\}$ and $0\prec1$ this construction
gives a group. If $X$ consists of more than 2 elements, the operation
$\upspoon$ is truly multivalued. 

We will call $(X,\yplus)$ a {\sfit linear order
multigroup,\/} and $(X,\upspoon)$ a {\sfit strict linear order 
multigroup.\/}

\subsection{Three element multigroups}\label{s3.5}
Define in a three element set $\{-1,0,1\}$ operation $\cplus$ by formulas
$0\cplus x=x\cplus0=x$ and $x\cplus x=x$ for any $x$, and
$-1\cplus1=1\cplus(-1)=\{-1,0,1\}$. One can easily check that this is a
multigroup. Following Marshall \cite{Marshall}, 
we denote this multigroup by $Q_2$.

Yet another multigroup of three elements can be defined as follows.
In $\{0,1,2\}$ define operation $\tplus$ by formulas 
$0\tplus x=x\tplus 0=x$ for any $x$, $1\tplus 1=2$,
$1\tplus2=2\tplus1=\{0,1\}$, $2\tplus2=\{1,2\}$. Denote this multigroup by
$M$.   

\subsection{Multigroups of double cosets}\label{s3.6}
Traditional examples of multigroups come from the group theory. 
Let $G$ be a group, and $H$ be a subgroup of $G$. Let $X$ be the set of
double cosets, $X=\{HgH\mid g\in G\}$. Define a binary multivalued
operation $(HaH)\cdot(HbH)=\{HahbH\mid h\in H\}$. This is a multigroup, see
Dresher and Ore \cite{DO}.

\subsection{Multigroup homomorphisms}\label{s3.7}
Let $X$ and $Y$ be multigroups. A map $f:X\to Y$ is called a {\sfit 
(multigroup) homomorphism\/} if $f(e)=e$ and 
$f(a\cdot b)\subset f(a)\cdot f(b)$  for any $a,b\in X$. 

A multigroup homomorphism $f:X\to Y$ is said to be  {\sfit strong\/} 
if $f(a\cdot b)= f(a)\cdot f(b)$ for any $a,b\in X$. If $Y$ is a group,
then any multigroup homomorphism $f:X\to Y$ is strong. \smallskip

\noindent{\bfit Example.\/} 
If $X$ and $Y$ are linearly ordered sets with the smallest 
elements $0_X$ and 
$0_Y$, respectively, then any monotone map $X\to Y$ mapping $0_X$ to 
$0_Y$ is a multigroup homomorphism. Such a map is a strong homomorphism 
iff it is injective on the complement of the preimage of $0_Y$.

\subsection{Submultigroups}\label{s3.8}
Let $X$ be a multigroup with neutral element $e$, and $Y\subset X$. If
$e\in Y$, the multigroup operation in $X$ induces a binary multivalued 
operation in $Y$ and together with any $a\in Y$ the inverse element $a^{-1}$
is contained in $Y$, then $Y$ with the induced operation is a multigroup.
It is called a {\sfit submultigroup\/} of $X$. 
The inclusion $Y\hookrightarrow X$ is a multigroup homomorphism. 
If this is a strong homomorphism, then $Y$ is said to be a {\sfit strong
submultigroup\/} of $X$.

A strong submultigroup $Y$ of a multigroup $X$ is said to be {\sfit
normal,\/} if $a^{-1}\cdot Y\cdot a=Y$ for any $a\in X$. Observe, that a
normal submultigroup of $X$ contains the set $a\cdot a^{-1}$ for any 
$a\in X$.

For a multigroup homomorphism $f:X\to Y$, the set $\{a\in X\mid f(a)=e\}$
is called the {\sfit kernel\/} of $f$ and denoted by $\Ker f$. 
Obviously, this is a normal submultigroup of $X$. 

\subsection{Factorization of a multigroup homomorphism}\label{s3.9}
As in the group theory, for any normal submultigroup $Y$ of a multigroup $X$
one can construct the quotient $X/Y$, and a multigroup structure in $X/Y$ such
that the projection $X\to X/Y$ is a strong multigroup homomorphism. Any
multigroup homomorphism $f:X\to Y$ admits a natural factorization 
$X\to X/\Ker f\to Y$. 

\begin{Th}\label{th-strong-hom}
If $f$ is surjective and strong, then the induced
 multigroup homomorphism $\bar f:X/\Ker f\to Y$ is an isomorphism. 
\end{Th}

\begin{proof}
Let $\Ga,\Gb\in X/\Ker f$ and their images $\bar f(\Ga)$, $\bar f(\Gb)$ 
under $\bar f:X/\Ker f\to Y$ coincide. 
Take representative $a,b\in X$ of $\Ga$ and $\Gb$, respectively. Then 
$f(a)=\bar f(\Ga)=\bar f(\Gb)=f(b)$. 
Since $f$ is strong, $f(b^{-1}a)=f(b)^{-1}f(a)=f(a)^{-1}f(a)\ni1$.
Thus $b^{-1}a\cap\Ker f\ne\varnothing$.  Therefore there exists 
$c\in b^{-1}a\cap\Ker f$. Then $a\in bc\subset a\Ker f$ and $\Ga=\Gb$. 
\end{proof}
 
 The assumption that $f$ is strong is necessary here.  
Without this assumption, a multigroup homomorphism with a trivial kernel 
may be non injective.
 On the other hand, most of interesting multigroup homomorphisms 
are not strong. This is a major new phenomenon distinguishing multigroups
from groups.
 
Here is the simplest example: $Q_2\to Q_1:1,-1\mapsto1,0\mapsto0$.  
It is easy to see that $f$ is a
multigroup homomorphism with $\Ker f=\{0\}$, but $f$ is not injective.
In order to verify that $f$ is not strong, consider $f(1\cdot 1)=f(1)=0$,
on the other hand, $f(1)\cdot f(1)=1\cdot 1=\{0,1\}$.

\subsection{Remarks on the history of multigroups}\label{s3.10}
The notion of multigroup appeared in the
literature in various contexts, sometimes under other names (such as 
{\sfit hypergroup\/} and {\sfit polygroup\/}). 
The earliest papers \cite{Marty}, \cite{Wall1} about them 
that I could find are dated by 1934.
Some of the authors who introduced these
objects apparently were not aware on their predecessors. 
 
Often the terms multigroup and hypergroup were used for objects of wider 
classes. 
For example, Dresher and Ore \cite{DO} used the word
multigroup for much wider class of object, while what is called 
multigroup above, Dresher and Ore \cite{DO} would call a regular 
reversible in itself multigroup with an absolute unit. 

The definition given in Section \ref{s3.1} seems to be the narrowest 
and closest multivalued
generalization of the notion of group. In comparatively recent literature
exactly the same notion was considered by S.~D.~Comer \cite{Comer} (under
the name of {\sfit polygroup\/}) and M.~Marshall \cite{Marshall}. 
A.~Connes and C.~Consani \cite{CC2} consider the same notion under 
the name of {\sfit hypergroup\/}, but restrict themselves to 
commutative hypergroups. 

There is another breed of multigroups in which the value of the operation 
contains a fixed number of elements some of which may coincide to each
other. Thus the operation takes values in the $n$th symmetric power of the
set rather than in the set of all its subsets. This kind of multigroups 
was considered by Wall \cite{Wall2} and, more recently, by Buchstaber and 
Rees \cite{BR}. The author is not aware about any construction which would
allow to relate multigroups of this kind with multigroups defined in
Section \ref{s3.1}.  

\section{Multirings, hyperrings and hyperfields}\label{s4}

\subsection{Multirings}\label{s4.1}
A set $X$ equipped with a binary multivalued operation  $\tplus$ and  
(univalued) multiplication $\cdot$ is called a  {\sfit multiring\/} if 
\begin{itemize}
\item $(X,\tplus)$ is a commutative multigroup, 
\item $(X,\cdot)$ is a monoid with unity $1$ (i.e., multiplication 
$(a,b)\mapsto a\cdot b$ is associative and $1\cdot a=a=a\cdot 1$ 
for any $a\in X$),
\item $0\cdot a=0$ for any $a\in X$. 
\item the multiplication is distributive over  $\tplus$
in the sense that for every $a\in X$ maps $X\to X$ defined by formulas
$x\mapsto a\cdot x$ and $x\mapsto x\cdot a$ are homomorphisms of multigroup
$(X,\tplus)$ to itself.
\end{itemize}
 
The distributivity means that $a\cdot(b\tplus c)\subset a\cdot b\tplus
a\cdot c$ and
$(b\tplus c)\cdot a\subset (b\cdot a)\tplus(c\cdot a)$ 
for any $a,b,c\in X$. 

A multiring is said to be {\sfit commutative\/} if the multiplication is
commutative.

\subsection{Hyperrings}\label{s4.1+}
If in a multiring $X$ distributivity holds in a stronger form: 
$a\cdot(b\tplus c)= a\cdot b\tplus
a\cdot c$ and
$(b\tplus c)\cdot a= (b\cdot a)\tplus(c\cdot a)$ 
for any $a,b,c\in X$, then $X$ is called a {\sfit hyperring.\/}

An equivalent description of this strong form of distributivity:
{\it for every $a\in X$ maps $X\to X$ defined by formulas
$x\mapsto a\cdot x$ and $x\mapsto x\cdot a$ are {\sfit strong} 
homomorphisms of the multigroup $(X,\tplus)$ to itself.}

Hyperrings were introduced by Krasner \cite{Krasner1}, see also
\cite{Krasner2} and \cite{CC1}. Multirings were introduced by Marshall
\cite{Marshall}. In Marshall's work the extra generality of the notion of
multirings is used:
some of the multirings that he considers in \cite{Marshall} are not 
hyperrings.  

\subsection{Hyperfields}\label{s4.2}
A multiring  $X$ is called  a {\sfit hyperfield\/} if
$X\sminus 0$ is a commutative group under multiplication. 

\begin{Th}[See Marshall \cite{Marshall}]\label{Th:hyperfield-hyperring}
Any hyperfield is a hyperring.
\end{Th}
\begin{proof} This theorem claims that 
 in a hyperfield distributivity holds in a strong form: 
$a(b\tplus c)= (ab)\tplus(ac)$. Indeed, the
inclusion $a(b\tplus c)\subset (ab)\tplus(ac)$
that holds true in  multirings 
implies the opposite inclusion if $a\ne0$:
$$(ab)\tplus (ac)=a^{-1}a((ab)\tplus(ac))\subset
a(a^{-1}ab\tplus a^{-1}ac)=a(b\tplus b).$$
For $a=0$, the equality $a(b\tplus c)= (ab)\tplus(ac)$ holds true since
both sides are equal to $0$.
\end{proof}

The notion of hyperfield is a direct generalization of the notion of field:
a field is a hyperfield, in which the addition is univalued.

\subsection{Double distributivity}\label{s4.3}
A multivalued addition creates various new phenomena, some of which 
may be quite unexpected. 

For example, in a usual ring, distributivity 
implies that $(a+b)(x+y)=ax+ay+bx+by$. In a multiring and even in
a hyperfield the proof fails. Moreover, the equality 
$$(a\tplus b)(x\tplus y)=ax\tplus ay\tplus bx\tplus by$$ 
may be incorrect, see Sections \ref{s5.1}, \ref{s6.4}.

Let us analyze, why
the arguments that deduce  $(a+b)(x+y)=ax+ay+bx+by$ from distributivity
for univalued addition do not work for multivalued addition.
In the univalued case, $x+y$ is just an element, and one can apply
distributivity: $(a+b)(x+y)=a(x+y)+b(x+y)$. Then for each summand
distributivity is applied again, giving the equality. 

In the case of multivalued addition $\tplus$, $(x\tplus y)$ is not an
element, but a set. Therefore the distributivity $(a\tplus b)c=ac\tplus
bc$, in which $c$ is a single element (that is an axiom in a multiring) 
cannot be applied in the situation when $c$ is a set $x\tplus y$.

\begin{Th}\label{th-half-double-distr}
In any multiring,
$(a\tplus b)(x\tplus y)\subset ax\tplus ay\tplus bx\tplus by$.
\end{Th}

\begin{proof}
For each element $c\in(x\tplus y)$ the distributivity
gives $(a\tplus b)c=ac\tplus bc$, and we get 
$(a\tplus b)(x\tplus y)=
\bigcup_{c\in(x\tplus y)}(a\tplus b)c=
\bigcup_{c\in(x\tplus y)}(ac\tplus bc)$.
On the other hand,
\begin{multline*}
ax\tplus ay\tplus bx\tplus by=
 (ax\tplus ay)\tplus(bx\tplus by)=\\
 a(x\tplus y)\tplus b(x\tplus y)\supset
 ac\tplus bc
\end{multline*}
for any $c\in(x\tplus y)$, and therefore 
$$ax\tplus bx\tplus ay\tplus by\supset 
\bigcup_{c\in(x\tplus y)}(ac\tplus bc)=(a\tplus b)(x\tplus y).
$$
\end{proof}

The opposite inclusion 
$(a\tplus b)(x\tplus y)\supset ax\tplus ay\tplus bx\tplus by$ in some 
multirings does not hold true (see Section \ref{s6.4}). However, there are
multirings in which it is true. Such multirings will be called {\sfit
doubly distributive.\/} 

In a doubly distributive multiring,
$\left({\displaystyle\top}_{i=1}^na_i\right)
\left({\displaystyle\top}_{j=1}^mb_j\right)=
{\displaystyle\top}_{i,j}a_ib_j
$.
This can be easily proved by induction over $m$ and $n$.

\subsection{The smallest hyperfields}\label{s4.4}
Multigroups $Q_1$ and $Q_2$ defined in Sections \ref{s3.3} and \ref{s3.5} 
above turn
into hyperfields in a unique way.
In $Q_1=\{0,1\}$ the multiplicative group is trivial and all the products 
are defined by axioms. 
 In $Q_2=\{-1,0,1\}$ the
multiplicative group is of order 2, therefore it is uniquely defined up to
isomorphism.  

Following  Connes and Consani \cite{CC1}, we will call the 
two-element hyperfield $Q_1$ the {\sfit Krasner hyperfield\/} and denote it 
by $\K$. It can be obtained from any field $k$ with more
than two elements by identifying all invertible elements. This is a 
multiplicative factorization (see Section \ref{sn4.11} below) that was 
invented by Krasner \cite{Krasner2}.
To the best of my knowledge, $\K$ did not appear in Krasner's
papers. 

The hyperfield $Q_2$ is called the {\sfit sign hyperfield\/} and denoted by
$\S$. 

These two hyperfields are doubly distributive. 

Multigroup $M$ defined also in Section \ref{s3.5} above cannot 
be turned into a
hyperfield, unless a multivalued multiplication would be allowed.
In this paper I prefer to stay with univalued multiplications only.
If a multiplication in a hyperfield was allowed to be multivalued, 
one could define $1\cdot x=x$ and $0\cdot x=0$ for any $x$ and
$2\cdot2=\{1,2\}$. Then the multiplicative multigroup of $M$ would be 
isomorphic to $Q_1$. 

\subsection{Characteristics}\label{s4.4+}
The notion of characteristic of ring splits when we pass to multirings.

Recall that the characteristic of a ring is the smallest positive integer
$n$ such that the sum $1+\dots+1$ of $n$ summands is 0, and zero if any 
sum $1+\dots+1$ does not vanish. 

This definition can be reformulated as follows: an integer $n$
is the characteristic of a ring if $n$ is the smallest positive number 
such that the sum $1+\dots+1$ of $n+1$ summands equals 1; 
if  $1+\dots+1\ne1$ for any number $k>1$ of summands, 
than the characteristic is zero.

For multirings, straightforward generalizations of these two definitions 
are not equivalent. I propose to preserve
the old term of characteristic for the number defined by a generalization 
of the first definition, and to call the second
one {\sfit C-characteristic\/} in honor of A.~Connes and C.~Consani, who
discovered the opportunity
of speaking about hyperrings of characteristic one, 
see \cite{CC1} and \cite{CC2}.

A natural number $n$ is called the {\sfit characteristic\/} of a multiring
if this is the smallest natural number such that
$0\in1\tplus1\tplus\dots\tplus1$ where the number of summands on the
right hand side is $n$. A multiring which has no finite characteristic 
$n\ge 2$ is said to be of characteristic 0. The characteristic of a
multiring $X$ is denoted by $\chr X$.

A natural number $n$ is called the {\sfit C-characteristic\/} of a
multiring if $n$ is the smallest positive number 
such that the sum $1\tplus\dots\tplus1$ of $n+1$ summands 
{\sfit contains\/} 1; 
if  $1\not\in1\tplus\dots\tplus1$ for any number $k>1$ of summands, 
than the C-characteristic is zero.  The C-characteristic of a
multiring $X$ is denoted by $\cchr X$.

Obviously, a multiring of characteristic $p\ne0$ has C-characteristic 
$\le p$. On the other hand,  
$\chr\S=0$ and $\cchr\S=1$, while $\chr\K=2$ and $\cchr\K=1$.

A multiring is said to be {\sfit idempotent,\/} if $a\tplus a=a$ for
any $a$ in it. A multiring is idempotent iff $1\tplus 1=1$ in it.

An idempotent multiring has C-characteristic 1, but the converse is not true:
in a multiring of C-characteristic 1 the set $1\tplus 1$ may consist 
of more than one element. 
For example, $\S$ is idempotent, $\K$ is not idempotent 
(because $1\tplus 1=\{0,1\}$ in $\K$), but both have
C-characteristic 1.

The characteristic of an idempotent multiring is 0, because in it
$1\tplus\dots\tplus1=1$ for any number of summands. In particular,
$\chr\S=0$.

A fundamental importance of the characteristic in the theory of rings
comes from the fact that the characteristic determines the minimal 
subring of the ring. For multirings no structural theorem of this sort is
known. 

Moreover, there is no commonly accepted notion of submultiring or
even subhyperfield. The point of disagreement is whether to require that
the subset underlying a submultiring would be closed under the 
multivalued addition, or just require that
the intersection of the subset with the sum of any of its two elements would
be non-empty. In the univalued situation there is no difference between
these two requirements. 

If we accept the alternative in which the subset contains the whole sum of
any two of its elements, then there is no hope for a reasonable list
of simple hyperfields. 

Under the other alternative, I am not aware about any conjectural  
list of simple hyperfields. However, the following two simple results in this
direction sounds inspiring. 

\begin{Th}\label{Th-simple-ch0-cch1}
In any multiring $R$ with $\chr R=0$  and $\cchr R=1$
the set $\{-1,0,1\}$ inherits from $R$ operations identical to the 
hyperfield operations in the sign hyperfield $\S$.\qed 
\end{Th}   

\begin{Th}\label{Th-simple-ch2} In any multiring $R$ with $\chr R=2$
the set $\{0,1\}$ inherits from $R$ operations identical to the operations
either in the Krasner hyperfield $\K$, if $\cchr R=1$, or in the field 
$\F_2$, if $\cchr R=2$.\qed
\end{Th}

\subsection{Hyperfields from a linearly ordered group}\label{s4.5} 
Let $X$ be a 
multiplicative group 
with a  linear order $\prec$ such that if $a\prec b$, then $ac\prec bc$ for
any $a,b,c\in X$. Let $Y$ be $X\cup\{0\}$. Extend the order $\prec$ from
$X$ to $Y$ by setting $0\prec x$ for any $x\in X$. 

The linear order $\prec$ gives rise to two multigroup structures in $Y$, 
with additions $\yplus$ and $\upspoon$, defined in Section \ref{s3.4}.
 Extend the multiplication in the group $X$ to $Y$ by defining $x0=0$ 
for each $x\in Y$.
It is easy to see that $Y$ with any of the additions, either $\yplus$ or
$\upspoon$, and this
multiplication is a doubly distributive hyperfield.

The Krasner hyperfield $\K$ can be obtained via this construction 
with $\yplus$ applied to the trivial group. The construction with
$\upspoon$ applied to the trivial group gives the field $\F_2$.

The sign hyperfield $\S$ cannot be presented as a hyperfield of a linear
order, because $\S$ is idempotent,
while any hyperfield of linear order is of characteristic 2: indeed,
$0\in 1\upspoon1\subset1\yplus 1$.

\subsection{Multiring homomorphisms}\label{s4.6}
Let $X$ and $Y$ be multirings. A map $f:X\to Y$ is called a {\sfit 
(multiring) homomorphism\/} if it is a multigroup homomorphism for the
additive multigroups of $X$ and $Y$ and a multiplicative homomorphism 
for their multiplicative semi-groups (the latter means that
$f(ab)=f(a)f(b)$ for any $a,b\in X$. A multiring homomorphism is said to be
{\sfit strong\/} if it is strong as a multigroup homomorphism for the
additive multigroups. 

There are many well known commonly used maps which are multiring 
homomorphisms. Below we consider a few examples.

\subsection{The sign homomorphism}\label{s4.7} The sign function 
$$\R\to\{0,\pm1\}:x\mapsto
\begin{cases} \frac{x}{|x|}, &\text{ if }x\ne0\\ 
  0,  &\text{ if }x=0\end{cases} 
$$ 
is a multiring homomorphism of the field $\R$ to the hyperfield $\S$.
For generalizations of this, see Marshall \cite{Marshall}.

\subsection{Ideals of a multiring}\label{s4.8} 
As in a ring, an {\sfit ideal\/} 
of a commutative multiring $X$ is a non-empty subset $I\subset X$ such that 
$a\tplus b\subset I$ for any $a,b\in I$, and $ab\in I$ 
if $a\in X$ and $b\in I$. For any multiring homomorphism $f:X\to Y$, 
its kernel $\Ker F=\{a\in X\mid f(a)=0\}$ is an ideal in $X$.  

As in the ring theory, for any ideal $I$ of a multiring $X$
one can construct the quotient $X/I$, and a multiring structure in $X/I$ 
such that the projection $X\to X/I$ is a strong multiring homomorphism. Any
multiring homomorphism $f:X\to Y$ admits a natural factorization 
$X\to X/\Ker f\to Y$. If $f$ is surjective and strong, then the induced
multiring homomorphism is an isomorphism. 

The assumption that $f$ is strong is necessary here. 
Without this assumption, a multiring homomorphism with a trivial kernel 
may be non injective.
On the other hand, most of interesting multiring homomorphisms 
are not strong. This is a major new phenomenon distinguishing multirings
from rings, cf \ref{s3.9}.

The example $\S\to \K:1,-1\mapsto1,0\mapsto0$ of a non-injective multigroup
homomorphism with trivial kernel considered in Section \ref{s3.9} above,
is in fact a multiring homomorphism of the sign hyperfield $\S$ to the
Krasner hyperfield $\K$ with the hyperfield structures defined in Section
\ref{s4.4}.

The sign homomorphism $\R\to \S$ defined in Section \ref{s4.7} above is
also a non-injective multiring homomorphism with trivial kernel.  

In a hyperfield $X$, the only ideals are $\{0\}$ and $X$. 

\subsection{Multiplicative kernel}\label{sn4.10}
The kernel does not contain all the information about a multiring 
epimorphism, in contrast to the ring theory. 
On the other hand, there exists a multiring epimorphism
that is not an isomorphism even if both the multirings
involved are hyperfields. 

If $f:X\to Y$ is a multiring homomorphism, and $X$ is hyperfields,
then either $\Ker f=0$ or $f=0$. 
Indeed, any ideal of a hyperfield $X$ is either $0$ or $X$ exactly for the
same reasons as if $X$ was a field. 

A hyperfield belongs to the traditional algebra at least in its
multiplicative structure. In a hyperfield the complement of the zero is a
commutative group. A non-trivial multiring homomorphism between 
hyperfields is a group homomorphism of the multiplicative groups.
As such, it has a kernel, the preimage of unity. 

In the univalued algebra, preimages of any two elements under a ring 
homomorphism are cosets related by translations which map bijectively one
of them onto another. In multivalued algebra
this phenomenon has no analogue. The formula $x\mapsto x+a$ defining a 
translation by $a$ in a ring, in a multiring 
turns into $x\mapsto x\tplus a$ which defines a multivalued map.
This map restricted to a preimage $f^{-1}(b)$ of an element $b$ 
under a multiring homomorphism $f:X\to Y$ does not send it to the
preimage of an element, but to the preimage of a set $b\tplus f(a)$, 
and this restriction is not invertible.
So, everything is broken.

If $X$ and $Y$ are hyperfields and $f:X\to Y$ is a multiring
homomorphism, then nonempty preimages of any non-zero element  
$b\in Y$ is related via natural bijections, which are multiplicative
translations, with $f^{-1}(1)$. 
For any $\Gb\in f^{-1}(b)$ formula
$x\mapsto \Gb^{-1}x$ maps $f^{-1}(b)$ onto $f{-1}(1)$, and this map has 
inverse $x\mapsto\Gb x$. 
The set $f^{-1}(1)$ is the kernel of the group homomorphism 
$X\sminus0\to Y\sminus0$ induced by $f$. Denote this kernel by $\Ker_mf$
and call it the {\sfit multiplicative kernel\/} of $f$. 
Obviously, $\Ker_mf$ is a subgroup of the multiplicative group of $X$.

Some fragments of this nice picture take place in a more general setup,
when $X$ and $Y$ are multirings and $f:X\to Y$ is a multiring homomorphism.
Still the multiplicative kernel $\Ker_mf$ is defined as $f^{-1}(1)$.
This set is obviously closed under multiplication, but may be not a
subgroup. Let $b\in f(X)$ and
$\Gb\in X$ such that $f(\Gb)=b$. Then multiplication by $\Gb$ maps
$\Ker_mf$ to $f^{-1}(b)$. However, as $\Gb$ may be non-invertible,
the construction for the inverse map $f^{-1}(b)\to\Ker_mf$ is not 
available. Moreover, simple examples show that the map $x\mapsto\Gb
x:\Ker_mf\to f^{-1}(b)$ may be neither injective nor surjective.

Elements $\Gb,\Gg\in X$ have the
same image under a map $f$ with given $\Ker_mf$ if there exist
$s,t\in\Ker_mf$ such that $s\Gb=t\Gg$. This is the weakest sufficient
conditions, which can be formulated solely in terms of $\Ker_mf$.
However, this is not a necessary condition.
 
\subsection{Multiplicative factorization}\label{sn4.11}
Any subgroup $S$ of the multiplicative group of a hyperfield $X$ can be
presented as the multiplicative kernel of a multiring homomorphism of 
$X$ to a hyperfield. A construction of this hyperfield was proposed
by Krasner \cite{Krasner2}, see also Marshall \cite{Marshall} and 
\cite{Marshall2}. 

The resulting hyperfield
is denoted by $X/_mS$. As a set, this is $(X^{\times}/S)\cup\{0\}$, 
a disjoint
union the zero and the quotient of the multiplicative group $X^{\times}$
by the subgroup $S$. The multiplication in $X/_mS$ is defined by the
multiplication in the quotient group and the identity $x0=0$. The addition
in $X/_mS$ induced by the addition in $X$. For cosets $aS,bS\in
X^{\times}/S$ the sum is $\{cS\mid c\in aS\tplus bS\}$, where $\tplus$
denotes the addition of subsets of $X$ induced by the addition in $X$.

The natural map $X\to X/_mS$ is a multiring homomorphism with
multiplicative kernel $S$.\smallskip  

\noindent{\bfit Examples. 1.} $X/_m(X\sminus\{0\})=\K$ for any 
hyperfield $X\ne\F_2$. \\
{\bfit 2.} $\R/_m\R_{>0}=\S$.\medskip

Marshall \cite{Marshall}, Example 2.6  
introduced the multiplicative factorization for  
more general situation in which $X$ is an arbitrary multiring and $S$ 
any subset of $X$ closed under multiplication. 
Then $X/_mS$ is a multiring obtained as the set of equivalence classes 
for the following equivalence relation: $a\sim b$ if there exist 
$s,t\in S$ such that $sa=tb$. If $0\in S$, then $X/_mS=0$.

Marshall's papers \cite{Marshall}, \cite{Marshall2} contain numerous
interesting applications of this construction. We restrict here to a simple
elementary example that was not considered in these papers. 

In a ring $\Z$ of integers, let $S$ be the set of all
odd numbers. Then $\Z/_mS$ can be identified with the set $\{2^n\mid
n=0,1,2,\dots\}$ of powers of 2. The multiplication in this multiring is
the usual multiplication of powers of $2$ (i.e., addition of the exponents).
The multivalued addition is the strict linear order operation $\upspoon$
from Section \ref{s3.4} for the order opposite to the ordering $<$ (i.e.,
$2^p\prec2^q$ iff $p>q$). This operation addresses to the following 
question: given two powers of 2, 
what is the highest power of 2 that can divide a sum of two
integers $m$ and $n$ for which the highest powers of 2 that divide
$m$ and $n$ are the given powers of 2. Clearly, $\chr \Z/_mS=2$ and
$\cchr \Z/_mS=2$.

If in this example we would replace 2 by an odd prime number $p$, then
the strict linear order operation would be replaced by a non-strict one,
the characteristic of the multiplicative quotient would be still 2,
and the C-characteristic would change to 1.  

\subsection{Prime ideals and homomorphisms to $\K$}\label{s4.9}
Cf. \cite{Marshall} Section 2.8 and \cite{CC1} Proposition 2.9. 
An ideal $I$ of a multiring is said to be {\sfit prime\/} 
if $1\ne I$ and $ab\in I$ implies that either $a\in I$ or $b\in I$. 

Notice that the kernel of any multiring homomorphism $f:X\to \K$ is a
prime ideal in $X$. Vice versa, any prime ideal can be presented in this
way. Indeed, for any prime ideal $I$ of a multiring $X$, define 
$$f_I:X\to \K:x\mapsto
\begin{cases} 0, &\text{ if }x\in I,\\
1, &\text{ if }x\not\in I.
\end{cases}
$$
This gives a multiring interpretation of prime ideals in usual rings.
Thus, the prime ideal spectrum $Spec K$ of a multiring $K$ can be 
identified with the set of multiring homomorphisms $K\to \K$.  

\section{Hyperfields from triangle inequalities}\label{s5}

\subsection{Triangle hyperfield}\label{s5.1} 
In the set $\R_+$ of non-negative
real numbers, define a multivalued addition $\nplus$ by formula
$$
a\nplus b=\{c\in\R_+\mid |a-b|\le c\le a+b\}.
$$
In other words, $a\nplus b$ is the set of all real numbers $c$ such that 
there exists an Euclidean triangle with sides of lengths $a,b,c$. 

\begin{Th}\label{th-triangle}
The set $\R_+$ with the multivalued addition $\nplus$ and usual
multiplication is a hyperfield.
\end{Th}

\begin{proof}
This addition is obviously commutative. It is also associative. In order to
prove this, just observe that both $(a\nplus b)\nplus c$ and
$a\nplus(b\nplus c)$ coincide with the set of real numbers $x$ 
such that there exists a Euclidean quadrilateral with sides of 
lengths $a,b,c,x$.\\ 
\centerline{$\vcenter{\hbox{\begin{picture}(0,0)%
\includegraphics{figs/nplus-assoc.pstex}%
\end{picture}%
\setlength{\unitlength}{4144sp}%
\begingroup\makeatletter\ifx\SetFigFont\undefined%
\gdef\SetFigFont#1#2#3#4#5{%
  \reset@font\fontsize{#1}{#2pt}%
  \fontfamily{#3}\fontseries{#4}\fontshape{#5}%
  \selectfont}%
\fi\endgroup%
\begin{picture}(2557,986)(166,-305)
\put(406,-241){\makebox(0,0)[lb]{\smash{{\SetFigFont{12}{14.4}{\rmdefault}{\mddefault}{\updefault}{\color[rgb]{0,0,.82}$a$}%
}}}}
\put(181,119){\makebox(0,0)[lb]{\smash{{\SetFigFont{12}{14.4}{\rmdefault}{\mddefault}{\updefault}{\color[rgb]{0,0,.82}$b$}%
}}}}
\put(496,524){\makebox(0,0)[lb]{\smash{{\SetFigFont{12}{14.4}{\rmdefault}{\mddefault}{\updefault}{\color[rgb]{0,0,.82}$c$}%
}}}}
\put(2116,-241){\makebox(0,0)[lb]{\smash{{\SetFigFont{12}{14.4}{\rmdefault}{\mddefault}{\updefault}{\color[rgb]{0,0,.82}$a$}%
}}}}
\put(1891,119){\makebox(0,0)[lb]{\smash{{\SetFigFont{12}{14.4}{\rmdefault}{\mddefault}{\updefault}{\color[rgb]{0,0,.82}$b$}%
}}}}
\put(2206,524){\makebox(0,0)[lb]{\smash{{\SetFigFont{12}{14.4}{\rmdefault}{\mddefault}{\updefault}{\color[rgb]{0,0,.82}$c$}%
}}}}
\put(2611,119){\makebox(0,0)[lb]{\smash{{\SetFigFont{12}{14.4}{\rmdefault}{\mddefault}{\updefault}{\color[rgb]{.82,0,0}$a{\scriptscriptstyle{\nabla}}(b{\scriptscriptstyle\nabla}c)$}%
}}}}
\put(901,119){\makebox(0,0)[lb]{\smash{{\SetFigFont{12}{14.4}{\rmdefault}{\mddefault}{\updefault}{\color[rgb]{.82,0,0}$(a{\scriptscriptstyle\nabla}b){\scriptscriptstyle\nabla}c$}%
}}}}
\end{picture}%
}}$}
The usual multiplication is distributive over $\nplus$. The role of zero 
is played by $0$. The negation $a\mapsto -a$ for $\nplus$ is identity,
as for any $a\in\R_+$ the only real number $x$ such that 
$0\in a\nplus x$ is $a$.  
\end{proof}

This hyperfield is called the {\sfit triangle hyperfield\/} and denoted by
$\mftr$. 

\begin{Th}\label{th-non-ddistr-tr}
Hyperfield $\mftr$ is {\sfit not\/} doubly distributive. 
\end{Th}
\begin{proof}
Indeed, $2\nplus 1=[1,3]$.
Therefore $(2\nplus 1)\cdot(2\nplus 1)=[1,3]\cdot[1,3]=[1,9]$. 
On the other hand,
$$2\cdot 2\nplus 2\cdot1\nplus 1\cdot 2\nplus 1\cdot 1=4\nplus 2\nplus
2\nplus1$$ 
contains 0, because there exists an isosceles trapezoid with 
sides 4, 2, 1, and 2. In fact, $4\nplus 2\nplus
2\nplus1=[0,9]$. 
\end{proof}

The operation $\nplus$ appears in the representation theory. 
Denote by $V^{(a)}$ the $a$th irreducible representation of $sl_2\C$
(i.e., the symmetric power $\Sym^aV$ of the standard 2-dimensional 
representation $V$). Then the set $\{a\nplus b\}\cap (2\Z+a+b)$
parametrizes the set of irreducible representations of $sl_2\C$ 
which are the summands in $V^{(a)}\otimes V^{(b)}$:
$$
V^{(a)}\otimes V^{(b)}=\bigoplus_{c\in (a\nplus b)\cap (2\Z+a+b)}V^{(c)}
$$

\subsection{Ultratriangle hyperfield}\label{s5.2}
The construction of Section \ref{s4.5} of hyperfield of linearly ordered
group, 
when applied to the multiplicative group of positive real
numbers equipped with the usual order $<$, defines a structure of hyperfield 
in $\R_+$. Recall that the addition in this hyperfield is defined by 
formula
 $$
(a,b)\mapsto a\yplus b=
\begin{cases} 
\max(a,b), &\text{ if } a\ne b\\
\{x\in \R_+\mid x\le a\}, &\text{ if }a=b.
\end{cases}
$$ 
the multiplication is the usual multiplication of real numbers.
As any hyperfield of a linear order, this one is doubly distributive, see
Section \ref{s4.5}.

There is another way to construct the same hyperfield. It is completely
similar to the construction of the triangle hyperfield of Section
\ref{s5.1}, but with the triangle inequality replaced by the
non-archimedian (or ultra) triangle inequality $|c|\le\max(|a|,|b|)$. 
This hyperfield is called the {\sfit ultratriangle hyperfield\/} and 
denoted by $\mfutr$. 

\subsection{Tropical hyperfield}\label{s5.3}
The map $\log:\R_{>0}\to\R$ is naturally extended by mapping $0$ to
$-\infty$. The resulting map $\R_+\to\R\cup\{-\infty\}$ is denoted
also by $\log$. This is a bijection, and the hyperfield structure of
$\mfutr$ can be transferred via $\log$ to $\R\cup\{-\infty\}$. Denote the
resulting hyperfield by $\mftrop$, and call it the  {\sfit tropical hyperfield.\/}

The hyperfield structure of $\mftrop$ can be obtained by the construction
of Section \ref{s4.5} applied to the additive group of all real numbers with
the usual order $<$. The hyperfield addition here differs from 
the semifield addition $(a,b)\mapsto\max(a,b)$ in $\T$ only on the diagonal:
$\max(a,a)=a\ne a\yplus a=\{x\in\T\mid x\le a\}$, although
$\max(a,a)\in a\yplus a$. 

Since $\mftrop$ will play an important role in what follows, let me describe it
explicitly and independently of constructions above.
The underlying set of $\mftrop$ is $\R\cup\{-\infty\}$, the addition is
$$
(a,b)\mapsto a\yplus b=
\begin{cases} 
\max(a,b), &\text{ if } a\ne b\\
\{x\in \mftrop\mid x\le a\}, &\text{ if }a=b.
\end{cases}
$$ 
the multiplication is the usual addition of real numbers extended in the
obvious way to $-\infty$, the hyperfield zero is $-\infty$, the hyperfield
unity is $0\in\R$. 

\subsection{Amoeba hyperfield}\label{s5.4}
Transfer via the same bijection $\log:\R_+\to\R\cup\{-\infty\}$ 
the structure of the triangle hyperfield $\mftr$ defined above in 
Section \ref{s5.1} to $\R\cup\{-\infty\}$. The resulting hyperfield is called 
the {\sfit amoeba hyperfield\/} and 
denoted  by $\mfamb$. 

The addition in $\mfamb$ is defined by formula
$$
a\closedcurlyvee b=\{c\in\R\mid \log(|e^a-e^b|)\le c\le \log(e^a+e^b)\},
$$
while the multiplication in $\mfamb$ is the  usual addition.

\subsection{Multiplicative seminorm}\label{s5.5} 
Let $K$ be a ring. Recall that a map $K\to\R_+:x\mapsto|x|$ is a 
{\sfit multiplicative seminorm\/} if $|x+y|\le|x|+|y|$ and $|xy|=|x||y|$ 
for any $x,y\in K$. Obviously, a multiplicative seminorm is nothing but a
multiring homomorphism of $K\to\mftr$.

\subsection{Non-archimedian multiplicative seminorm}\label{s5.6}
Recall that a multiplicative seminorm $K\to\R_+:x\mapsto|x|$ is 
non-archimedian if $|x+y|\le\max(|x|,|y|)$. A non-archimedian
multiplicative seminorm $K\to\R_+$ is a multiring homomorphism 
$K\to\mfutr$. A non-archimedian valuation map, that is a composition of 
a non-archimedian multiplicative seminorm with 
$\R_+\to\mftrop:x\mapsto \log x$, is a multiring homomorphism $K\to\mftrop$.

\section{Tropical addition of complex numbers}\label{s6} 

\subsection{Definition}\label{s6.1}
The {\sfit tropical sum\/} $a\cplus b$ of arbitrary complex numbers $a$ and $b$ is defined as follows. 
\begin{figure}[htb]
\centerline{$\input{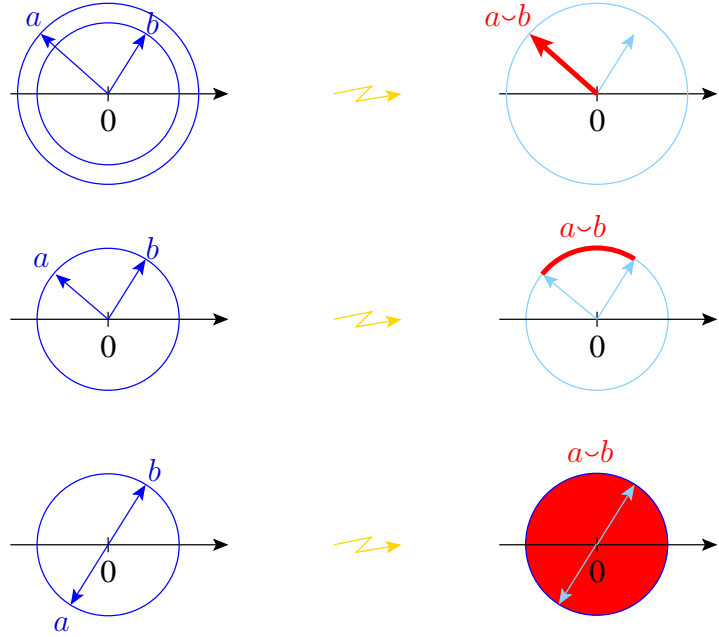}$}
\caption{Tropical addition of complex numbers.}
\label{tplusC}
\end{figure}
\begin{itemize} 
\item
If $|a|>|b|$, then $a\smallsmile b=a$. 
\item
If $|a|<|b|$, then $a\cplus b=b$. 
\item 
If $|a|=|b|$ and $a+b\ne0$, then $a\cplus b$ is the {\sfit set\/} of
all complex numbers which belong to the shortest arc connecting $a$ with $b$ 
on the circle of complex numbers with the same absolute value.\\ 
In formulas:
if $a=re^{\Ga i}$, $b=re^{\Gb i}$ with $|\Gb-\Ga|<\pi$, then 
$a\cplus b=\{re^{\Gf i} \mid|\Ga-\Gf|+|\Gf-\Gb|=|\Ga-\Gb|\}$.
\item 
If $a+b=0$, then $a\cplus b$ is the whole closed disk 
$\{c\in\C \mid|c|\le|a|\}$.
\end{itemize}

\subsection{Obvious properties}\label{s6.2}
The tropical addition is {\sfit commutative,\/} \\ $a\cplus b=b\cplus a$ for
any $a,b\in\C$. 
This follows immediately from the definition.

The zero plays the same role of the neutral element as it plays for the usual
addition: $a\cplus 0=a$ for any $a\in\C$.

Furthermore, for any complex number $a$ there is a unique $b$ such that
$0\in a\cplus b$. This $b$ is $-a$.  

\subsection{Associativity}\label{s6.3}
\begin{Th}\label{th1}
The tropical addition of complex numbers is associative.
\end{Th}

A straightforward proof is elementary, but quite
cumbersome. It is postponed till Appendix 1.  

\subsection{Distributivity.}\label{s6.4}
\begin{Th}\label{Th2}
The usual multiplication of complex numbers is distributive over the 
tropical addition: $a(b\cplus c)=ab\cplus ac$ for any complex numbers $a$,
$b$ and $c$. 
\end{Th}

Indeed, all the constructions and characteristics of summands involved in the
definition of tropical addition are invariant under multiplication by 
a complex number: the ratio of absolute values of two complex numbers is
preserved, an arc of a circle centered at $0$ is mapped to an arc of a
circle centered at $0$, a disk centered at $0$ is mapped to a disk centered
at $0$.\qed

\begin{Th}\label{th-non-dd-tc}
The multiplication of complex numbers is not doubly distributive
over the tropical addition.
\end{Th}

\begin{proof}
Compare $(1\cplus i)(1\cplus -i)$ with $1\cdot1\cplus 1\cdot -i\cplus
i\cdot1\cplus i\cdot(-i)=1\cplus i\cplus -i\cplus 1$.

Since $1\cplus i$ is the arc of the unit circle connecting $1$ and $i$,
and $1\cplus -i$ is the arc of the unit circle connecting $1$ and $-i$,
their (pointwise) product is the arc of the unit circle connecting $i$ and
$-i$. 
On the other hand, the tropical sum $1\cplus i\cplus -i\cplus 1$ is the
whole unit disk. 
\end{proof}

\subsection{Complex tropical hyperfield}\label{s6.5}
Thus, the set $\C$ of complex numbers with the tropical addition and usual
multiplication is a hyperfield. Denote it by $\tc$ and call {\sfit 
complex tropical hyperfield.\/}

\subsection{The tropical sum of several complex numbers}\label{s6.6}
The tropical sum of several complex numbers is affected only by those 
summands which have the greatest absolute value. A summand whose 
absolute values is not maximal does not contribute at all. 

\begin{Th}\label{th-1}
Let $a_1$, \dots, $a_n$ be complex numbers with absolute values equal $r$. 
Then 
\begin{itemize} 
\item either $a_1\cplus\dots\cplus a_n$ is the closed disk with radius $r$
centered at $0$, it can be obtained as the sum of at most three of the
summands  $a_1$, \dots, $a_n$  and $0\in\Conv(a_1,\dots,a_n)$,
\item or $a_1\cplus\dots\cplus a_n$ is contained in a half of the circle 
of radius $r$ centered at 0 and is the tropical sum of at most two of the
summands $a_1$, \dots, $a_n$ (so, it is either a point or a closed arc).
\end{itemize}
\end{Th}
 
The proof of Theorem \ref{th-1} is elementary and straightforward. See
Appendix 2.

\begin{cor}\label{cor-2}
The tropical sum of any finite set of complex numbers equals the tropical
sum of a subset consisting at most of three summands. If the tropical sum
does not contain the zero, then the number of summands can be reduced to
two.\qed
\end{cor}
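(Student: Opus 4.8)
\smallskip
\noindent\textbf{Proof proposal.} The heart of the matter is Theorem~\ref{th-1}, from which Corollary~\ref{cor-2} will follow in one line (noted at the end). For Theorem~\ref{th-1}, the plan is to normalize and then run a case analysis governed by the position of the origin relative to $\Conv(a_1,\dots,a_n)$. First I would reduce to the case $r=1$: if $r=0$ all summands vanish and the sum is the single point $0$, while if $r>0$ one divides every $a_j$ by $r$, a multiplication by a positive real, which commutes with $\cplus$ by Theorem~\ref{Th2}. Two elementary remarks about the closed unit disk $D$ will be used throughout. First, every tropical sum of complex numbers of modulus $1$ lies in $D$: by induction it suffices to check $D\cplus c\subset D$ for $|c|=1$, and indeed for $|x|\le1$ the set $x\cplus c$ equals $\{c\}$ when $|x|<1$, a subarc of the unit circle when $|x|=1$ and $x\ne -c$, and $D$ itself when $x=-c$. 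Second, $D\cplus c=D$ whenever $|c|=1$, since $-c\in D$ and $(-c)\cplus c=D$; so once a partial tropical sum reaches $D$ it stays $D$.

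Now the case analysis, with $r=1$ and using associativity (Theorem~\ref{th1}) to regroup freely. \emph{Case 1.} Two of the summands, say $a_i,a_j$ with $i\ne j$, are antipodal. Then $a_i\cplus a_j=D$, so by the two remarks the whole sum is $D$; it is realized already by $a_i,a_j$, and $0\in\Conv(a_i,a_j)\subset\Conv(a_1,\dots,a_n)$. \emph{Case 2.} No two summands are antipodal but $0\in\Conv(a_1,\dots,a_n)$. No vertex of the hull is $0$, and no edge of it can pass through $0$ (that would force two antipodal vertices), so $0$ is interior to the hull; hence the $a_j$ lie in no closed half-plane through $0$, and by Carath\'eodory's theorem $0$ is a convex combination of three of them, $a_i,a_j,a_k$, with $0$ interior to the (nondegenerate) triangle they span. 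The crux is the identity $a_i\cplus a_j\cplus a_k=D$: the ray from $a_k$ through $0$ leaves that triangle across the edge $[a_i,a_j]$ and meets the circle again at $-a_k$, which therefore lies on the subarc of the circle cut off by the chord $[a_i,a_j]$ on the side away from $a_k$; since $a_i,a_j,a_k$ span more than a semicircle, this is the shorter $a_ia_j$-arc, that is, it is exactly $a_i\cplus a_j$. Hence $-a_k\in a_i\cplus a_j$ and $(a_i\cplus a_j)\cplus a_k\supset(-a_k)\cplus a_k=D$; with the two remarks the full sum is $D$, realized by three summands, and $0\in\Conv(a_1,\dots,a_n)$ holds by hypothesis.

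\emph{Case 3.} No two summands are antipodal and $0\notin\Conv(a_1,\dots,a_n)$. Then all the $a_j$ lie on an open semicircular arc, so the smallest closed arc $A$ containing them has length $<\pi$; let $a_L,a_R$ be its endpoints. The claim is $a_1\cplus\dots\cplus a_n=a_L\cplus a_R=A$, which lies in a half of the circle (a single point if $a_L=a_R$). This rests on a ``middle summand drops out'' lemma: if $p,q,s$ occur in this cyclic order on $A$, then $p\cplus q\cplus s=p\cplus s$. Indeed any pairwise tropical sum of two points of $A$ is the subarc of $A$ joining them (they are not antipodal and that subarc has length $<\pi$), so $(p\cplus q)\cplus s=\bigcup_{x\in\mathrm{arc}[p,q]}\mathrm{arc}[x,s]=\mathrm{arc}[p,s]=p\cplus s$. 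Inducting on the number of distinct summands (equal ones may be merged, since $a\cplus a=\{a\}$ when $|a|=1$) removes every non-extremal summand, leaving $a_L\cplus a_R$; and the subarc of $A$ from $a_L$ to $a_R$ is all of $A$. This is the second alternative, with at most two summands.

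The step I expect to be the main obstacle is the computation in Case~2 --- verifying $a_i\cplus a_j\cplus a_k=D$ when $0$ is interior to the triangle $a_ia_ja_k$, which comes down to locating the antipode $-a_k$ on the shorter $a_ia_j$-arc; this is the one place where genuine plane geometry, rather than bookkeeping with arcs and angles, is needed, and also where the various degenerate and boundary configurations deserve care. Corollary~\ref{cor-2} then follows immediately: in any finite set of complex numbers only the summands of maximal modulus affect the tropical sum (if $|a|>|b|$ then $a\cplus b=a$, and this propagates by induction), so the general case reduces to the equal-modulus case settled by Theorem~\ref{th-1}.
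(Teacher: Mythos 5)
Your argument is correct, and it proves Theorem~\ref{th-1} (from which the corollary follows) by a route genuinely different from the paper's. The paper's Appendix~2 is a single induction on the number of summands: it tracks how the partial sum $a_1\cplus\dots\cplus a_{k-1}$ (an arc or the disk, by the inductive hypothesis) evolves when $a_k$ is adjoined, splitting on whether $\pm a_k$ lies in the current partial sum and then doing a small plane-geometry check to propagate the $0\in\Conv$ conclusion. You instead organize the whole proof around the dichotomy $0\in\Conv(a_1,\dots,a_n)$ versus $0\notin\Conv(a_1,\dots,a_n)$: in the first case you invoke Carath\'eodory to extract a triple (and the no-antipodes hypothesis then forces $0$ to be interior to that triangle, which is exactly what makes $-a_k$ land on the short $a_ia_j$-arc); in the second case a ``middle summand drops out'' lemma reduces everything to the two extremal points of the containing arc. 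The two approaches buy different things. The paper's induction is self-contained and never needs Carath\'eodory, but the witnesses (the $\le 3$ summands realizing the sum) are only implicit in the inductive bookkeeping. Your version produces those witnesses directly and makes the connection to convexity of the summand set completely transparent — at the price of invoking Carath\'eodory and of needing the small side argument (which you correctly supply via the no-antipodes hypothesis) that the Carath\'eodory triple can be taken with $0$ in its \emph{interior}, something the bare theorem does not give. Your final one-line reduction of the corollary to Theorem~\ref{th-1} (only the summands of maximal modulus matter) matches what the paper leaves implicit.
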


\begin{cor}\label{cor-1}
The tropical sum of a finite set of complex numbers contains the zero iff 
the zero is contained in the convex hull of the summands having the
greatest absolute value.\qed 
\end{cor}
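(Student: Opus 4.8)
The plan is to reduce the corollary to the summands of largest modulus and then read it off Theorem~\ref{th-1}. Let $a_1,\dots,a_n$ be the given numbers and $r=\max_i|a_i|$. First I would discard the small summands: by the defining clause $a\cplus b=a$ for $|a|>|b|$ together with associativity (Theorem~\ref{th1}), a summand of modulus $<r$ contributes nothing, so $a_1\cplus\dots\cplus a_n$ equals the tropical sum $T$ of the sub-list $S=\{a_i:|a_i|=r\}$. (By Theorem~\ref{th-1}, $T$ is either the closed disk of radius $r$ about $0$ or an arc of the circle $|z|=r$; in either form a one-line check gives $T\cplus b=T$ for every $b$ with $|b|<r$, which is the only computation I would not spell out in full.) Since $\Conv(S)$ is exactly the convex hull of the summands of greatest absolute value named in the statement, it remains to prove that $0\in T$ iff $0\in\Conv(S)$.

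Next I would apply Theorem~\ref{th-1} to the list $S$. If $r=0$ then $T=\{0\}$ and $\Conv(S)=\{0\}$, and both conditions hold; so assume $r>0$. The first alternative of Theorem~\ref{th-1} makes $T$ the closed disk of radius $r$ about $0$, so $0\in T$, and it simultaneously asserts $0\in\Conv(S)$. The second alternative confines $T$ to the circle $|z|=r$, whose center $0$ is not on it since $r>0$, so $0\notin T$. Consequently $0\in T$ holds exactly when the first alternative occurs; in particular the implication $0\in T\Rightarrow 0\in\Conv(S)$ is immediate.

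The hard part is the reverse implication $0\in\Conv(S)\Rightarrow 0\in T$: for this I need that $0\in\Conv(S)$ forces the first alternative of Theorem~\ref{th-1} (equivalently, that the second alternative forces $0\notin\Conv(S)$). If Theorem~\ref{th-1} is already read as the sharp dichotomy "$T$ is the disk $\iff 0\in\Conv(S)$", we are done. If only the one-directional statement is available, I would argue directly from Carath\'eodory's theorem: $0$ is a convex combination of at most three points of $S$, so either two of them are antipodal, say $b$ and $-b$, in which case $b\cplus(-b)$ is already the full disk of radius $r$; or $0$ lies strictly inside a genuine triangle $b_1b_2b_3$ with vertices in $S$, and then the antipode $-b_3$ lies on the short arc $b_1\cplus b_2$, so $b_1\cplus b_2\cplus b_3\supseteq(-b_3)\cplus b_3$ is again the whole disk. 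Either way the tropical sum of these at most three terms is the closed disk of radius $r$; since that disk is unchanged under $\cplus a$ for any $a$ with $|a|=r$, adjoining the remaining elements of $S$ leaves it unchanged, so $T$ is the disk and $0\in T$. This geometric point — that $-b_3$ sits on the short arc $b_1\cplus b_2$ whenever $0$ is interior to the triangle — is where the real work lies; everything else is a direct appeal to Theorems~\ref{th1} and~\ref{th-1}.
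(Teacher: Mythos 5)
Your proof is correct and takes essentially the same route as the paper, which marks the corollary \qed{} as an immediate reading of Theorem~\ref{th-1}: discard the lower-modulus summands and observe that the two alternatives of that theorem are precisely $0\in T$ with $0\in\Conv$ and $0\notin T$. Your explicit check that small summands are absorbed and your Carath\'eodory fallback (verifying that $0\in\Conv(S)$ really does force the first alternative, since $-b_3$ lands on the short arc $b_1\cplus b_2$ when $0$ is interior to the triangle) are sound and simply make precise what the paper leaves to the reader.
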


\section{Relations of $\tc$ with other hyperfields}\label{s7}

\subsection{Submultirings and subhyperfields}\label{s7.1}
\begin{Th}\label{th-sbmultirings}
Any subset $A$ of $\C$ containing $0$, invariant under the involution 
$x\mapsto-x$ 
and closed with respect to multiplication inherits the structure of 
multiring from $\tc$. 

If, furthermore, 
$A\sminus 0$ is invariant under the involution $x\mapsto x^{-1}$, then $A$
with the inherited structure is a hyperfield.
\qed
\end{Th}

In particular, any subfield of $\C$ inherits structure of
hyperfield from  $\tc$. 

\subsection{The tropical real hyperfield $\tr$}\label{s7.2}
For example, $\R$ inherits the structure of hyperfield. The induced addition 
$(a,b)\mapsto a\cplus_{\R}b=(a\cplus b)\cap\R$ can
be described directly as follows:$$
a\cplus_{\R} b = 
\begin{cases}
\{a\}, &\text{ if }\quad |a|>|b|,\\
\{b\}, &\text{ if }\quad |a|<|b|,\\
\{a\}, &\text{ if }\quad  a=b,\\
[-|a|,|a|], &\text{ if }\quad a=-b.
\end{cases}
$$
\begin{figure}[bht]
\centerline{$\input{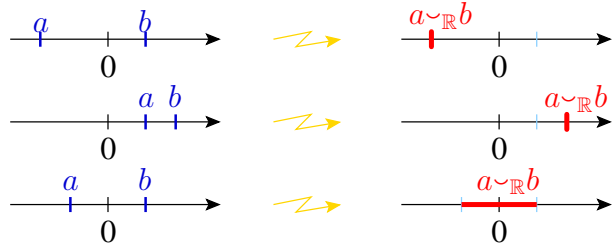}$}
\caption{Tropical addition of real numbers.}
\label{tplusR}
\end{figure}
See also Figure \ref{tplusR}. 

The operation $(a,b)\mapsto a\cplus_{\R}b$
is called the  {\sfit tropical real addition\/} or even just  {\sfit
tropical addition,\/} when there is no danger of confusion.   
The set $\R$ with the tropical real addition and usual multiplication is
called the {\sfit tropical real hyperfield\/} and denoted by $\tr$. 
 
\begin{Th}\label{Th-dd-tr} $\tr$ is doubly distributive.
\end{Th}

\begin{proof}
The only situation in which double distributivity 
$$(a\cplus_{\R}b)(c\cplus_{\R}d)=ac\cplus_{\R}ad\cplus_{\R}bc\cplus_{\R}bd$$
is not obvious, is when both factors in the left hand side consist of more
than one element. Then $a=-b$ and $c=-d$ and both the left hand side 
and right hand side equal
$[-|ac|,|ac|]$.
\end{proof}

\subsection{The hyperfield of signs}\label{s7.3}
A structure of
hyperfield comes in the same way to subsets $A\subset\C$ which are not
subfields of $\C$. For example, $\{-1,0,+1\}\subset\C$ satisfies the
conditions of Theorem \ref{th-sbmultirings}, and hence inherits a hyperfield 
structure from $\tc$. This hyperfield appeared above as $\S$. 

Recall that there is a multiring homomorphism 
$$\R\to\{0,\pm1\}:x\mapsto
\begin{cases} \frac{x}{|x|}, &\text{ if }x\ne0\\ 
  0,  &\text{ if }x=0\end{cases} 
$$ 
 of the field $\R$ to the hyperfield $\S$.

This map is also a multiring homomorphism $\tr\to \S$.

\subsection{The phase hyperfield}\label{s7.4}
Let $\Phi$ be $\{z\in\C\mid |z|=1\}\cup\{0\}$, that is the unit circle in
$\C$ united with its center. This set satisfies the
conditions of Theorem \ref{th-sbmultirings}, and, by that theorem, $\Phi$
inherits a hyperfield structure from $\tc$. It will be called
the {\sfit phase hyperfield\/} and denoted by $\Phi$. One can obtain 
$\Phi$ also as $\C/_m\R_{>0}$.
Notice that
$\S=\Phi\cap\R$. 

The map 
$$\C\to\Phi: z\mapsto
\begin{cases} \dfrac{z}{|z|}, &\text{ if }z\ne0\\
0, &\text{ if }z=0
\end{cases}
$$
is called the {\sfit phase map\/}. This is a multiring homomorphism in two
senses: $\C\to\Phi$ and  $\tc\to\Phi$. 

\subsection{Embedding  $\T\subset\tc$}\label{s7.5}
Recall that a {\sfit semifield} is a set with two (univalued) operations, 
addition and multiplication, which satisfy all the axioms of field, 
except that there is no subtraction. 

A classical example of a semifield is the set $\R_+$ of
non-negative real numbers with the usual addition and multiplication. 
Another semifield structure in the same set is defined by replacing the
usual addition with the operation of taking the greatest of two numbers:
$(a,b)\mapsto\max(a,b)$. 

There is an isomorphism of the tropical semifield 
$\T$  onto the semifield $\R_{\ge0, \max,\times}$ mapping $x\mapsto \exp
x$ for $x>0$, and $-\infty\mapsto 0$. 

Observe that the semifield addition $(a,b)\mapsto\max(a,b)$ in 
$\R_+$ is induced from 
the addition in $\tc$ (or $\tr$, does not matter).
Indeed,  $a\cplus b=\max(a,b)$  for any $a,b\in\R_+$.  

Thus, the semifield $\R_{\ge0,\max,\times}$ is a subset of the hyperfield  
$\tc$ closed with respect to both binary operations of $\tc$, 
and the binary operations 
coincide with the operations of the semifield $\R_{\ge0,\max,\times}$.
In particular, the inclusion $\R_{\ge0,\max,\times}\to\tc$ and its
composition $\T\to\tc$ with the isomorphism $\T\to\R_{\ge0,\max,\times}$
are homomorphisms.

{\bf Warning.} There is a natural map in the opposite direction  
$\tc\to\R_+:z\mapsto|z|$. It is a right inverse for the inclusion.
However, this is not a homomorphism for the tropical addition $\cplus$. 
Indeed, $x\cplus(-x)\cap\R_+=[0,|x|]$ for any $x\in\R$,  
but $|x|\cplus|-x|=|x|$, which does not contain $[0,|x|]$ for $x\ne0$.

In order to make the map $\tc\to\R_+:z\mapsto|z|$ a homomorphism,
one should consider a hyperfield structure in $\R_+$. 

\subsection{The absolute value and amoeba maps}\label{s7.6}
The map $\C\to\R_+:z\mapsto|z|$ is also a homomorphism from many
points of view. This is
\begin{itemize} 
\item  a multiring homomorphism $\C\to\mftr$ 
from the field of complex numbers to the triangle hyperfield (see Section
\ref{s5.1});
\item  a multiring homomorphism $\tc\to\mftr$ 
from the complex tropical hyperfield $\tc$ to the triangle hyperfield;
\item a multiring homomorphism $\tc\to \mfutr$ from $\tc$ to the
ultratriangle hyperfield (see Section \ref{s5.2});
\end{itemize}
The composition of this map with $\log:\R_+\to\R\cup\{-\infty\}$
is a multiring homomorphism 
\begin{itemize} 
\item $\C\to \mfamb$;
\item $\tc\to \mfamb$;
\item $\tc\to\mftrop$.
\end{itemize}

\subsection{Complex polynomials and $\tc$}\label{s7.7}
The map $w$ which is defined and discussed in this section and the next
one, essentially  was defined by
Mikhalkin  \cite{Mikha} and used him in his
definition of complex tropical curves. However, the algebraic properties of
$w$ were not considered, because the tropical addition of complex numbers
was not available. 

Let $p(X)\in\C[X]$ be a polynomial in one variable  $X$ with complex
coefficients, $p(X)=\sum_{k=0}^na_kX^k$, where $a_k\in\C$, $a_n\ne0$. 
Let $w(p)=\frac{a_n}{|a_n|}e^n$. Further, let $w(0)=0$.
This defines a map $\C[X]\to\C:p\mapsto w(p)$. 

\begin{Th}\label{th0}
The map $w$ is a multiring homomorphism of the polynomial ring  $\C[X]$ to 
the hyperfield  $\tc$, that is   
$w(p+q)\in w(p)\tplus w(q)$ and
$w(pq)=w(p)w(q)$ for any $p,q\in \C[X]$.
\end{Th}

\begin{proof}[{\bfseries Proof.\/}]
The value of $w$ on a polynomial $p$ is equal to the value of $w$ on the
monomial of $p$ having the greatest degree. For a monomial $p(X)=aX^n$ the
value of $w$ equals $\dfrac{p(e)}{|p(1)|}$. Obviously, the latter formula 
defines a multiplicative homomorphism. 

Let us prove that $w(p+q)\in w(p)\tplus w(q)$ for any $p,q\in \C[X]$.
Let the highest degree monomials of $p$ and $q$ are $aX^n$ and $bX^m$,
respectively (so that $\deg p=n$, $\deg q=m$). 
If $n>m$, then the highest degree term of  $p+q$ equals $aX^n$ and
$w(p+q)=w(p)=w(p)\cplus w(q)$. Similarly, if $n<m$, then 
$w(p+q)=w(q)=w(p)\cplus w(q)$. 

If the degrees of $p$ and $q$ are the same, and the coefficients $a$ and $b$ 
of their monomials of the highest degree are such that
$\frac{a}{|a|}+\frac{b}{|b|}\ne0$, then these monomials do not annihilate
each other in the sum, and the monomial of highest degree of $p+q$ 
is the sum of these monomials. Its degree equals $\deg p=\deg q$, the
coefficient is $a+b$. However, the argument $\frac{a+b}{|a+b|}$ of this 
coefficient is not determined by $\frac{a}{|a|}$ and $\frac{b}{|b|}$.
It can take any value in the open intervale between the arguments of the 
summands. In particular, it takes values in the set of arguments of complex
numbers belonging to $w(p)\tplus w(q)$.

If $\deg p=\deg q$ and the coefficients $a$ and $b$ of the highest terms
are such that  $\frac{a}{|a|}+\frac{b}{|b|}=0$, then the highest terms may
annihilate under summation. Therefore the highest term of $p+q$ is either
equal to the sum of the highest terms of  $p$ and $q$,
or come from terms of lower degrees and cannot be recovered from the terms
of the highest degree. The only that we can say about it if we know 
only $w(p)$ and $w(q)$ (i.e., if we know only the arguments of the 
coefficients in the terms of the highest degrees and the degrees), 
is that its degree is not greater than the degree of the summands. This 
implies  $w(p+q)\in w(p)\tplus w(q)$.   
\end{proof}

\subsection{Real exponents}\label{s7.8}
The image of $w$ consists of only those complex numbers whose absolute
values are powers of  $e$. However similar constructions are able to
provide multiring homomorphisms onto the whole $tc$. For this, it is enough
to replace usual polynomials by polynomials with arbitrary real exponents.

Let us replace $\C[X]$ by the group algebra $\C[\R]$ of the additive group
$\R$. Elements of $\C[\R]$ can be thought of as $\sum_ka_kX^{r_k}$, 
where $a_k\in\C$, $r_k\in\R$. The formal variable $X$ symbolizes here the
transition from additive notation for addition in $\R$ to multiplicative
notation in $\C[\R]$, where additive notation is reserved 
for the formal sum.

Elements of  $\C[\R]$ may be interpreted as functions  $\C\to\C$.
For this, let us turn   $\sum_ka_kX^{r_k}$ into an exponential sum 
$\sum_ka_ke^{r_kT}$ by replacing $X$ with $e^T$.

The map $w:\C[X]\to\C$ extends to $\C[\R]$ as follows: choose from the sum  
$\sum_ka_kX^{r_k}$ the summand with the greatest exponent, say, 
$a_nX^{r_n}$ and apply the same formula to it $\frac{a_n}{|a_n|}e^{r_n}$. 
The map is a multiring homomorphism of the ring  $\C[\R]$ onto the
hyperfield $\tc$. The proof that this is a multiring homomorphism is
literally the same as the proof of Theorem \ref{th0} above. 

A ring can be replaced here by an algebraically closed field real-power
Puiseux series $\sum_{r\in I}a_rt^r$, where $I\subset\R$ is a well-ordered
set. Cf. Mikhalkin \cite{Mikha}, Section 6.

This construction demonstrates how one can obtain the tropical addition of
complex numbers from the usual addition of polynomials. It is clear why it
should be multivalued. For complex numbers $a$ and 
$b$ with $|a|=|b|$, but  $a\ne-b$ any $c$ for the open arc 
$(a\tplus b)\sminus\{a,b\}$, one can find $A,B,C\in\C[\R]$ such that 
$w(A)=a$, $w(B)=b$ and $w(C)=c$, see
Figure \ref{sumCoe}. Complex numbers $a,b\in\C$ with $a+b=0$ are
represented as the images under $w$ of polynomials $A,B\in\C[\R]$ 
with highest degree terms opposite to each other and annihilating under
addition of the polynomials. The highest degree term of $A+B$
is not controlled by the highest degree terms of the summands $A$ and $B$, 
but its degree does not exceed the degree of the summands. 

\begin{figure}
\centerline{$\input{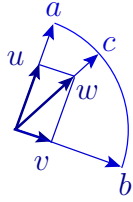}$}
\caption{For given $a,b,c\in\C$, constructing  $u,v,w\in\C$
such that 
$A=uX^{r}$, $B=vX^r$, $C=wX^r$ with $r=\log|a|$ and $w(A)=a$, $w(B)=b$,
$w(C)=c$, $A+B=C$. }
\label{sumCoe}
\end{figure}

\section{Continuity}\label{s8}

In the set of all subsets of a topological space, there are various natural
topological structures. However none of them is perfect. The most classical
of them are three structures introduced by Vietoris \cite{Vietoris} in 
1922. The multivalued additions considered above are continuous with 
respect to one of them, the upper Vietoris topology, and this implies
important properties of multivalued functions defined by polynomials over
these hyperfields.

\subsection{Vietoris topologies}\label{s8.1}
The {\sfit upper Vietoris topology\/} in the set $2^X$ of all subsets of a
topological space $X$ is the topology generated by the sets 
$2^U\subset2^X$, where $U$ is open in $X$. A neighborhood of a set 
$A\subset X$ in the upper Vietoris topology should contain all subsets 
of a set $U$ that is open in $X$ and contains $A$.

This topology is quite odd. For example, it is far from being Hausdorff:
sets with non-empty intersection cannot have disjoint neighborhoods in it.
Therefore usually a limit in the upper Vietoris topology is not unique.
By adding new points to a limit we would get a limit. Probably this is what 
motivates the word {\sfit upper\/} in the name of the topology.

The {\sfit lower Vietoris topology\/} in the set $2^X$ of all subsets of a
topological space $X$ is the topology generated by the sets 
$2^X \sminus 2^{C}$, where $C$ is a closed subset of $X$. In other words,
the lower Vietoris topology is generated by sets 
$\{Y\subset X\mid Y\cap U\ne\varnothing\}$, where $U$ is an open set
of $X$. In the lower Vietoris topology, closed sets are generated by closed
sets of $X$ in the most direct way: a closed set $C\subset X$ 
gives rise to the set $2^C\subset 2^X$ closed in the lower Vietoris
topology. Recall that in the upper Vietoris topology open sets are
generated similarly by open subsets of $X$. A neighborhood of a set 
$A\in 2^X$  in the lower Vietoris topology should contain all sets
intersecting with open sets $U_1,\dots,U_n\subset X$ which meet $A$. 
A limit in the lower Vietoris topology also usually is not unique, but for
the opposite reason: it would stay a limit under removing of its points.

The topology generated by the upper and lower Vietoris topologies is called 
just the {\sfit Vietoris topology.\/}
  
\subsection{Continuity and semi-continuities}\label{s8.2}
A multimap $X\multimap Y$ is said to be  
\begin{itemize} 
\item {\sfit upper semi-continuous\/} if the corresponding map 
$f^\uparrow:X\to2^Y$ is continuous with respect to the upper Vietoris
topology in  $2^Y$;
\item {\sfit lower semi-continuous\/} if  
$f^\uparrow:X\to2^Y$ is continuous with respect to the lower Vietoris
topology in  $2^Y$;
\item {\sfit continuous\/} if  
$f^\uparrow:X\to2^Y$ is continuous with respect to the Vietoris topology in 
 $2^Y$ (i.e., $X\multimap Y$ is both upper and lower semi-continuous).
\end{itemize}

Recall that the set $\{a\in X\mid f(a)\subset B\}$ 
is called the {\sfit upper preimage\/} of  $B$ under $f$, and the set 
$\{a\in X\mid f(a)\cap B\ne\varnothing\}$ is called the  {\sfit lower
preimage\/} of $B$ under $f$. 

It is easy to see that $f:X\multimap Y$ is upper (respectively, lower)
semi-continuous if and only if the upper (respectively, lower) preimage of
any set open in $Y$ is open in $X$. 

\subsection{Tropical additions}\label{s8.3}
\begin{Th}\label{th2}
The tropical addition $\C\times\C\multimap{\C}:(a,b)\mapsto a\cplus b$ is
not lower semi-continuous 
(i.~e., the corresponding map $\C\times\C\to2^{\C}$ is not continuous with
respect to the classical topology in $\C^2$ and the lower Vietoris topology in 
$2^{\C}$).
\end{Th}

\begin{proof}
It would suffice to find a set such that it is open in the lower Vietoris 
topology and its preimage is not open in the classical topology of
$\C^2$. Take, for instance, the set $H$ consisting of sets $A$ meeting 
the open disk of radius 1 and center 0. Its preimage is the set of pairs
$(a,b)$ of complex numbers such that $a\cplus b$ meets the disk. The
preimage of $H$ consists of pairs $(a,b)$ which satisfy one of the 
following two conditions: either  $|a|<1$ and $|b|<1$, or $a=-b$. 
Obviously, this set is not open. 
\end{proof}

Similarly one can prove that the additions in the ultratriangle 
hyperfield $\mfutr$ (see Section \ref{s5.2}),  the tropical 
hyperfield $\mftrop$ (Section \ref{s5.3}) and the real tropical
hyperfield $\tr$ (Section \ref{s7.2}) are not lower semi-continuous.  

\begin{Th}\label{th3}
The tropical addition $\C\times\C\multimap{\C}:(a,b)\mapsto a\cplus b$
is upper semi-continuous (i.~e.,~the corresponding map
$\C\times\C\to2^{\C}$ is continuous with respect to the classical topology
in $\C^2$ and the upper Vietoris topology in $2^{\C}$).
\end{Th}

\begin{proof} Let us prove the corresponding local continuity, i.e., prove
that for any neighborhood $V\subset2^{\C}$ of the image $a\cplus b$ of 
$(a,b)$ there exists a neighborhood $U\subset\C^2$ of $(a,b)$ such that the
image of $U$ is contained in $V$. In the upper Vietoris topology, a base of 
neighborhoods of $a\cplus b$ is composed by sets $2^W$ where $W$ runs over
a base of neighborhoods of $a\cplus b$ in $\C$. Thus, it would suffice 
for an arbitrarily small neighborhood 
$W\supset a\cplus b$ to find a neighborhood $U$ of $(a,b)$ in 
$\C^2$ such that  $x\cplus y\subset W$ for any $(x,y)\in U$. 
Consider one by one each of the three kinds of $(a,b)$. 

If $|a|>|b|$, then $a\cplus b=a$. Any neighborhood of 
$a$ contains an open disk centered at $a$. Diminish it if needed in order
to ensure that its radius  $r$ is smaller than $\frac12(|a|-|b|)$. 
Choose for  $W$ an open disk $B_r(a)$ of radius $r$ centered at $a$.
Then for $U$ one can take the neighborhood $B_r(a)\times B_r(b)$ 
of $(a,b)$. Obviously, $B_r(a)\cplus B_r(b)= B_r(a)$.

If $|a|=|b|$ and $a+b\ne 0$, then $a\tplus b$ is the shortest arc $C$ 
connecting $a$ and $b$ in the circle centered at 0.
Let $r$ be a positive real number, which so small that the disks  $B_r(a)$
and $B_r(b)$ do not contain points symmetric to each other with respect to
0. Any neighborhood of $C$ in $\C$ contains $W=B_\rho(a)\cplus B_\rho(b)$
with some $\rho\in(0,r)$. Let $U= B_\rho(a)\times B_\rho(b)$. 

If $|a|=|b|$ and $a+b=0$, then $a\cplus b$ is the closed disk centered at 0 
with radius $|a|$. Any neighborhood of this disk in $\C$ contains a
concentric open disk of some radius $r>|a|$. Let $U$ be this disk. 
The image of $U\times U$ under the tropical addition is $U$. 
\end{proof}

Similarly one can prove that the tropical addition of real numbers is upper
semi-continuous.

\subsection{Continuity of triangle additions}\label{s8.4}
Recall that the triangle addition of non-negative real numbers is defined 
by formula $a\nplus b=\{c\in\R_+\mid |a-b|\le c\le a+b\}$.

\begin{lem}\label{lem-cont}
A multimap $f:X\multimap\R$ is continuous if there exist continuous functions 
$f_\pm:X\to\R$ with $f(x)=[f_-(x),f_+(x)]$ for any $x\in X$ and
$f_+(x)>f_-(x)$ on everywhere dense subset of $X$. \qed
\end{lem}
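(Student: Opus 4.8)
The plan is to establish the two halves of continuity separately — upper and lower semi-continuity of $f$ — using the criterion recalled in Section~\ref{s8.2}: a multimap is upper (respectively lower) semi-continuous iff the upper (respectively lower) preimage of every open subset of the target is open.

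\emph{Upper semi-continuity.} Fix $x_0$ and an open $W\subseteq\R$ with $f(x_0)=[f_-(x_0),f_+(x_0)]\subseteq W$. This interval is compact, so for small $\varepsilon>0$ its $\varepsilon$-neighbourhood $(f_-(x_0)-\varepsilon,\,f_+(x_0)+\varepsilon)$ still lies in $W$. By continuity of $f_-$ and $f_+$ there is a neighbourhood $U$ of $x_0$ on which $|f_-(x)-f_-(x_0)|<\varepsilon$ and $|f_+(x)-f_+(x_0)|<\varepsilon$; then $f(x)\subseteq(f_-(x_0)-\varepsilon,\,f_+(x_0)+\varepsilon)\subseteq W$ for every $x\in U$. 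Hence the upper preimage of $W$ is open, for every open $W$.

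\emph{Lower semi-continuity.} Fix $x_0$ and an open $W$ with $f(x_0)\cap W\ne\varnothing$, and choose $c\in f(x_0)\cap W$. If $f_-(x_0)<f_+(x_0)$, then, $W$ being open, $c$ may be replaced by a point lying strictly between $f_-(x_0)$ and $f_+(x_0)$ and still in $W$; continuity of $f_-$ and $f_+$ then makes the strict inequalities $f_-(x)<c<f_+(x)$ hold for all $x$ in some neighbourhood $U$ of $x_0$, so that $c\in f(x)\cap W$ throughout $U$. If instead $f_-(x_0)=f_+(x_0)=c$, choose an open interval around $c$ contained in $W$; by continuity $f(x)=[f_-(x),f_+(x)]$ is squeezed into that interval for $x$ near $x_0$, so $f(x)$ still meets $W$. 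In either case the lower preimage of $W$ contains a neighbourhood of $x_0$.

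Putting the two together yields continuity of $f$. The only step that needs care is the first case of lower semi-continuity when $c$ happens to be an endpoint of $[f_-(x_0),f_+(x_0)]$: one must first nudge $c$ into the open interior, which requires $f_+(x_0)>f_-(x_0)$ — this is exactly where the density assumption enters, with the points at which $f_-=f_+$ absorbed by the second case. I expect this endpoint/degenerate bookkeeping inside the lower semi-continuity argument to be the main (and essentially the only) obstacle; everything else is a routine $\varepsilon$-neighbourhood argument from continuity of $f_\pm$ together with compactness of closed intervals.
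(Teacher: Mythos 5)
Your proof is correct; the paper states this lemma with $\qed$ immediately after it and supplies no argument, so there is nothing printed to compare against. The one remark worth flagging is your closing sentence about density: the density hypothesis in fact plays no role in the continuity argument itself. Its only function is to guarantee, via continuity of $f_\pm$, that $f_+(x)\ge f_-(x)$ for \emph{every} $x$, so that $[f_-(x),f_+(x)]$ is non-empty and $f$ is a well-defined multimap in the sense of Section \ref{s2.3}. Once that is secured, your two-case treatment of lower semi-continuity already covers both $f_-(x_0)<f_+(x_0)$ (nudge $c$ into the open interior) and $f_-(x_0)=f_+(x_0)$ (squeeze the whole interval into $W$) with no further appeal to density; the same proof would go through if the hypothesis ``$f_+>f_-$ on a dense set'' were replaced by the weaker ``$f_+\ge f_-$ everywhere.''
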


The triangle addition satisfies the hypothesis of Lemma, hence it is
continuous, (i.e., the corresponding map $\R_+\times\R_+\to2^{\R_+}$ is
continuous with respect to the classical topology in $\R_+\times\R_+$ the
the Vietoris topology in $2^{\R_+}$).

Similarly, from Lemma \ref{lem-cont} it follows that the addition in the 
amoeba hyperfield $\mfamb$ is continuous.

\subsection{Properties of upper semi-continuous multimaps}\label{s8.5} 
As we see above the additions in hyperfields $\tc$, $\tr$, $\mftr$, 
$\mfutr$, $\mftrop$ and $\mfamb$ are upper semi-continuous. Let $K$ denote
one of these hyperfields.
 
First, notice, that for univalued maps 
upper semi-continuity is equivalent to continuity. 

Second, obviously, a composition of upper semicontinuous maps is upper
semi-continuous.

From these two statements it follows immediately that a multimap defined by 
a polynomial over $K$ is upper semi-continuous.

\begin{Th}\label{th5}
Let $X,Y$ be topological spaces, $f:X\multimap Y$ be an upper 
semi-continuous multimap and $C\subset Y$ be a closed set. Then the set 
$\{a\in X\mid f(a)\cap C\ne\varnothing\}$ is closed.
\end{Th}

\begin{proof} The set  
$\{B\in2^Y\mid B\subset X\sminus C\}$ is open in the upper Vietoris
topology of $2^Y$. Therefore, due to upper semi-continuity of multimap 
$f$, the preimage $f^{+}=\{a\in X\mid f(a)\subset X\sminus C\}$ of this set
under the $f^\uparrow:X\to2^Y$ is open. Consequently, the set
$\{a\in X\mid f(a)\cap C\ne\varnothing\}=X\sminus \{a\in X\mid f(a)\subset
X\sminus C\}$ is closed.  
\end{proof}

\begin{cor}\label{cor-th5} For any polynomial $p$ over 
$K$, the set defined by condition $0\in p(x_1,\dots,x_n)$
is closed in $K^n$.\qed
\end{cor}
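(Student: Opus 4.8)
The plan is to deduce this from Theorem \ref{th5}, applied to the multimap $f=p\colon K^n\multimap K$ obtained by evaluating the polynomial $p$, with the closed set taken to be the singleton $C=\{0\}\subset K$.

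First I would verify the two hypotheses of Theorem \ref{th5}. That $\{0\}$ is closed in $K$ is immediate, since each of $\tc$, $\tr$, $\mftr$, $\mfutr$, $\mftrop$, $\mfamb$ carries a metrizable (hence Hausdorff) topology --- the usual topology of $\C$, of $\R$, of $\R_+$, or of $\R\cup\{-\infty\}$ --- so that points are closed. That $f$ is upper semi-continuous was already noted in the paragraph preceding Theorem \ref{th5}; in more detail: the coordinate projections $K^n\to K$, the constant maps $K^n\to K$, and the multiplication $K\times K\to K$ are univalued and continuous, hence upper semi-continuous, and a tuple of continuous maps into a product is continuous; the relevant multivalued addition $K\times K\multimap K$ is upper semi-continuous by Theorem \ref{th3}, the remarks following it, and the continuity statements of Section \ref{s8.4}; and a composition of upper semi-continuous multimaps is again upper semi-continuous. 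Writing $p$ as an iterated $\tplus$ of its monomials therefore exhibits $f$ as a finite composition of maps of these kinds, so $f$ is upper semi-continuous.

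Granting both hypotheses, Theorem \ref{th5} with $X=K^n$, $Y=K$ and $C=\{0\}$ shows that the set
\[
\{(x_1,\dots,x_n)\in K^n\mid f(x_1,\dots,x_n)\cap\{0\}\ne\varnothing\}
\]
is closed in $K^n$, and this is precisely the set $\{(x_1,\dots,x_n)\in K^n\mid 0\in p(x_1,\dots,x_n)\}$ whose closedness is asserted.

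The one place that calls for a little care --- the main, if modest, obstacle --- is the claim that evaluating $p$ over the hyperfield $K$ yields a single well-defined multimap to which the composition principle applies: one uses the commutativity and associativity of $\tplus$ (Theorem \ref{th1} for $\tc$, and the analogous facts for the other $K$, all hyperfields) to see that the multivalued sum of the monomial values does not depend on the order or the parenthesization of the successive additions, and one should be satisfied that upper semi-continuity of the binary addition propagates to all its iterates through the composition principle.
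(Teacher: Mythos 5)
Your proof is correct and follows the same route the paper intends: apply Theorem \ref{th5} with $C=\{0\}$ (closed since the underlying space of each relevant $K$ is metrizable) and $f$ the evaluation multimap of $p$, whose upper semi-continuity is exactly the conclusion drawn in the paragraph preceding Theorem \ref{th5} from the upper semi-continuity of the addition in $K$ and closure under composition. The extra remark about well-definedness of the iterated $\tplus$ via associativity and commutativity is a reasonable point of care but does not change the argument.
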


Finally, recall two well-known theorems about upper semi-continuous
multimaps.

\begin{Th}\label{th4}
The image of a connected set under an upper semi-continuous multimap 
is connected, if the image of each point is connected. \qed
\end{Th}

\begin{Th}\label{th8}
The image of a compact set under an upper semi-continuous multimap 
is compact, if the image of each point is compact.
\qed
\end{Th}

\begin{cor}\label{cor}
A multimap defined by a polynomial $p(x_1,\dots,x_n)$ over $K$  
maps connected sets to connected and compact sets to compact. In
particular, the graph of $p$ is connected.  \qed
\end{cor}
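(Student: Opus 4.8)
The plan is to deduce everything from Theorems \ref{th4} and \ref{th8} together with the observation recorded in Section \ref{s8.5} that a multimap given by a polynomial over $K$ is upper semi-continuous. Recall that such a multimap $p\colon K^n\multimap K$ is assembled, by composition, from the coordinate projections $K^n\to K$ and the constant maps (univalued, continuous), the univalued multiplication $K\times K\to K$ (continuous, being in each of the six cases the usual multiplication of complex or real numbers, or the usual addition of real numbers extended to $-\infty$), and the multivalued addition $K\times K\multimap K$ (upper semi-continuous; see Theorem \ref{th3} and its analogues in Section \ref{s8.3}, and Lemma \ref{lem-cont} in Section \ref{s8.4}). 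Note also that the underlying set of $K$ is one of $\C$, $\R$, $\R_+$, $\R\cup\{-\infty\}$, all of which are connected, so $K^n$ is connected.

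To apply Theorems \ref{th4} and \ref{th8} one must check that the image $p(a)$ of each point $a\in K^n$ is connected and compact. I would prove this by induction on the way $p$ is built. For a constant or a coordinate, $p(a)$ is a single point. If $p=qr$, then $p(a)=\{uv\mid u\in q(a),\,v\in r(a)\}$ is the image of the connected compact set $q(a)\times r(a)$ under the continuous multiplication map, hence connected and compact. If $p=q\tplus r$, then $p(a)=\bigcup_{u\in q(a),\,v\in r(a)}(u\tplus v)$ is the image of the connected compact set $q(a)\times r(a)$ under the binary addition multimap of $K$; that multimap is upper semi-continuous, and its value $u\tplus v$ at each point is --- by the explicit descriptions in Sections \ref{s6.1}, \ref{s7.2}, \ref{s5.1}, \ref{s5.2}, \ref{s5.3}, \ref{s5.4} --- a point, a closed interval, a closed arc, or a closed disk, in every case connected and compact; so Theorems \ref{th4} and \ref{th8} yield that $p(a)$ is connected and compact.

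With this in hand, Theorem \ref{th4} shows that the upper semi-continuous multimap $p\colon K^n\multimap K$, having connected point-images, carries connected sets to connected sets, and Theorem \ref{th8} shows it carries compact sets to compact sets. For the last assertion, the graph of $p$ is the image of $K^n$ under the multimap $\Gamma\colon K^n\multimap K^n\times K$, $x\mapsto\{x\}\times p(x)$; this $\Gamma$ is upper semi-continuous (a tube-lemma argument reduces a basic upper Vietoris neighbourhood of $\{x\}\times p(x)$ in $K^n\times K$ to a product neighbourhood, using that $p(x)$ is compact and $p$ is upper semi-continuous), and its value at each $x$ is $\{x\}\times p(x)$, connected because $p(x)$ is. Since $K^n$ is connected, Theorem \ref{th4} gives that the graph of $p$ is connected.

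The only step that is not purely formal is the inductive claim that $p(a)$ is connected and compact, and inside it the case-by-case verification that the binary addition of $K$ sends each pair of elements to a connected compact set; this is where one must consult the six definitions, but since each such value is visibly a point, a closed interval, a closed arc, or a closed disk, it is bookkeeping rather than a genuine obstacle. One should merely take care to invoke the upper semi-continuity of the binary addition itself when applying Theorems \ref{th4} and \ref{th8} in the inductive step, and (for the graph) the analogous elementary fact that pairing an upper semi-continuous multimap with a continuous univalued map yields an upper semi-continuous multimap.
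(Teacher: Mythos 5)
Your proof is correct and follows exactly the route the paper intends: the corollary is stated with a bare $\qed$ as an immediate consequence of Theorems~\ref{th4} and~\ref{th8} applied to the upper semi-continuous multimap defined by $p$, and you have carefully supplied the hypothesis-checking that the paper leaves implicit, namely the inductive verification that $p(a)$ is connected and compact for each point $a$ (using the explicit form of the addition in each of the six hyperfields), together with the tube-lemma argument showing that $x\mapsto\{x\}\times p(x)$ is upper semi-continuous so that Theorem~\ref{th4} yields connectedness of the graph.
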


\section{Dequantizations}\label{s9}

\subsection{The Litvinov-Maslov dequantization}\label{s9.1}
Consider a family of semirings $\left\{S_h\right\}_{h\in[0,\infty)}$ 
(recall that a semiring is a sort of ring, but without subtraction).
As a set, each of $S_h$ is $\R$.
The semiring operations $+_h$ and $\times_h$ in $S_h$ 
are defined as follows:
\begin{align}a+_h b&=\begin{cases} h\ln(e^{a/h}+e^{b/h}),& \text{ if }h>0 \\
                            \max\{a,b\}, & \text{ if } h=0
\end{cases}\label{oplus}\\
a\times_h b&= a+b\label{odot}
\end{align}
These operations  depend continuously on $h$.
For each $h>0$ the map 
$$D_h:\R_{>0} \to S_h: x\mapsto h\ln x$$ 
is  a semiring isomorphism of $\left\{\R_{>0},+,\cdot\right\}$ onto
$\left\{S_h,+_h,\times_h\right\}$, that is
$$D_h(a+b)=D_h(a)+_hD_h(b),\qquad
D_h(ab)=D_h(a)\times_hD_h(b).
$$
Thus $S_h$ with $h>0$ can be considered as a copy of $\R_{>0}$
with the usual operations of addition and multiplication. On the other
hand, $S_0$ is a copy $\R_{\max,+}$ of $\R$, where the operation 
of taking maximum is considered as an addition, and the usual addition, 
as a multiplication.

Applying the terminology of quantization to this deformation, we must
call $S_0$ a classical object, and $S_h$ with $h\ne0$, quantum ones.
The whole deformation is called the {\sfit Litvinov-Maslov dequantization\/}
of positive real numbers. The addition in the resulting semiring 
$\R_{\max,+}$, is idempotent in the sense that $\max(a,a)=a$ for any $a$.  
 
The analogy with Quantum Mechanics motivated  the following \smallskip

\noindent{\bfit
Correspondence principle\/}  (Litvinov and Maslov \cite{L-M}).
{\sfit  ``There exists a (heuristic) correspondence, in the spirit of the
correspondence principle in Quantum Mechanics, between important,
useful and interesting constructions and results over the field of real
(or complex) numbers  (or the semiring of all nonnegative numbers) and 
similar constructions and results over idempotent semirings.''}

This principle proved to be very fruitful in a number of situations, see
\cite{L-M}, \cite{L-M-S}. The Litvinov-Maslov dequantization helps to 
relate the corresponding things.
 
Indeed, any valid formula involving only positive real 
numbers and only arithmetic operations survives under the limit and turns 
into a valid formula in $\R_{\max,+}$. 

The correspondence principle is formulated much wider, than this transition
to limit allows: not only for the semirings of all positive real numbers
and $\R_{\max,+}$, but for any idempotent semiring, on one hand, and the fields
$\R$ and $\C$, on the other hand. 

One may expect that there are extra mathematical reasons for this heuristic
correspondence. Below similar dequantization deformations are presented.
However, the dequantized objects are not semifields, but rather
mutlifields. 

\subsection{Dequantization of the triangular hyperfield to the ultra-triangular}\label{s9.2}
For a positive real number $h$, let $R_h:\R_+\to\R_+$ be the map
defined by formula $x\mapsto x^{\frac1h}$. 
This map is invertible. Its inverse is defined by $R_h^{-1}:x\mapsto x^h$.

Obviously, $R_h$ is an isomorphism with respect to multiplication, and does
not commute with the triangular addition 
$$(a,b)\mapsto a\nplus b=\{c\in\R_+\mid |a-b|\le c\le a+b\}.$$ 
In order to make $R_h$ a hyperfield isomorphism, pull back the triangular 
addition, that is define
$$
a\nplus_h b=R_h^{-1}(R_h(a)\nplus R_h(b))=
\{c\in\R_+\mid |a^{1/h}-b^{1/h}|^h\le
c\le(a^{1/h}+b^{1/h})^h\}.
$$
Observe that if $a\ne b$, then 
$$\lim_{h\to0}|a^{1/h}-b^{1/h}|^h=\lim_{h\to0}(a^{1/h}+b^{1/h})^h=\max(a,b),$$
and if $a=b$, then $|a^{1/h}-b^{1/h}|^h=0$, while
$\lim_{h\to0}(a^{1/h}+b^{1/h})^h=a$.
Thus the endpoints of the segment $a\nplus_hb$ tend to the endpoints of the
segment $a\yplus b$ as $h\to0$. Define $a\nplus_0b$ to be $a\yplus b$. 

For $h\ge0$, denote by $\mftr_h$ the hyperfield with the underlying 
set $\R_+$,
addition $\nplus_h$ and multiplication coinciding with the usual
multiplication of real numbers. For $h>0$, the map $R_h$ is an isomorphism 
$\mftr_h\to\mftr$. The hyperfield $\mftr_0$ coincides with
$\mfutr$. 

Thus, $\mftr_h$ is a dequantization (degeneration) of $\mftr$ to $\mfutr$. 
The map $\log:\R_+\to\R\cup\{-\infty\}$ converts $\mftr_h$ into a
dequantization of the amoeba hyperfield $\mfamb$ to the tropical
hyperfield $\mftrop$.

\subsection{Dequantization of $\C$ to $\tc$}\label{s9.3}
For a positive real number $h$, let $S_h{:\ }\C\to\C$ be the map defined by
the formula
$$z\mapsto 
\begin{cases}
|z|^\frac1h\frac{z}{|z|} &\text{ for }z\ne0;\\
0 &\text{ for }z=0.
\end{cases}$$
This map is invertible. Its inverse is defined by the formula
$$S_h^{-1}:z\mapsto 
\begin{cases} 
|z|^h\frac{z}{|z|} &\text{ for }z\ne0;\\
0 &\text{ for }z=0.
\end{cases}$$

Obviously, $S_h$ is an isomorphism with respect to multiplication, that is 
$S_h(ab)=S_h(a)S_h(b)$. However, it does not commute with addition.

In order to make $S_h$ an isomorphism with respect to addition, let us
redefine the addition on the source of the map. In other words, induce a
binary operation on the set of complex numbers: 
$$a+_hb=S_h^{-1}(S_h(a)+S_h(b)).$$
In this way we get a field $\C_h=(\C,+_h,\times)$ (which is nothing
but a copy of $\C$) and an isomorphism $S_h:\C_h\to\C$.

It is easy to see that $a\plus_hb$ converges as  $h$ tends to zero. 
Namely:
\begin{itemize} 
\item if $|a|>|b|$, then $\lim_{h\to0}(a\plus_hb)=a$;
\item if $|a|=|b|$ and $a+b\ne0$, then 
$\lim_{h\to0}(a\plus_hb)=|z|\frac{a+b}{|a+b|}$;
\item if $a+b=0$, then $\lim_{h\to0}(a\plus_hb)=0$.
\end{itemize} 
Denote $\lim_{h\to0}(a+_hb)$ by {$a+_0b$}. See figure \ref{t0C}. 
\begin{figure}[htb]
\centerline{$\vcenter{\hbox{\input{figs/t0C.pstex_t}}}$}
\caption{The limit $a+_0b$ of $a\plus_hb$ as $h\to0$. }
\label{t0C}
\end{figure}

Some properties of the operation $(a,b)\mapsto a+_0b$ are nice.
It is commutative, distributive for the usual multiplication of complex
numbers, the zero behaves appropriately: $a+_00=a$ for any $a\in\C$.
Furthermore, for any $a\in\C$ there exists a unique complex number $b$ such
that  $a+_0b=0$, and this $b$ is nothing but  $-a$. 

However, the operation $(a,b)\mapsto a+_0b$ is far from being perfect.
First, $a+_0b$ is not continuous as a function of  $a$ and $b$. Certainly,
this happens because the convergence $a+_hb\to a+_0b$ is not uniform. 
Second, it is not associative. 

In order to see the latter, compare
$(-1+_0i)+_01$ и $-1+_0(i+_01)$:
\begin{multline*}(-1+_0 i)+_0 1=
\left(\exp\left(\pi i\right)+_0\, 
\exp\left(\frac{\pi i}2\right)\right)+_0\, 1\\
=\exp\left(\frac{3\pi i}4\right)+_0\,
\exp(0)=\exp\left(\frac{3\pi i}8\right)
\end{multline*}
On the other hand,
\begin{multline*}
-1\plus_0(i+_0 1)=\exp(\pi i)+_0\left(\exp\left(\frac{\pi
i}2\right)+_0\, \exp(0)\right)\\
=\exp(\pi i)+_0\, \exp\left(\frac{\pi i}4\right)=\exp\left(\frac{5\pi
i}8\right).
\end{multline*}

The tropical addition $(a,b)\mapsto a\cplus b$ 
introduced in Section \ref{s6.1} above does not have
this defect. It is associative, see Appendix 1. Though, it is multivalued.

Another advantage of the tropical addition is that it is upper
semicontinuous, see Section \ref{s8.3}. The tropical addition is also a
limit of $\plus_h$ as $h\to0$, but not in the sense of pointwise 
convergence.

\begin{Th}\label{th-deq}
Let $\GG=\{(a,b,a+_hb,h)\in \C^3\times\R_+\mid h>0, a\in\C,b\in\C\}$.
Then the intersection of $\C^3\times\{0\}$ with the closure of $\GG$ is the
$\GG_{\cplus}\times\{0\}$, where $\GG_{\cplus}$ is the graph 
$\{(a,b,a\cplus b)\mid a\in\C,b\in\C\}$ of $(a,b)\mapsto a\cplus b$. 
\end{Th}

Thus the tropical addition of complex numbers is a dequantization of the
usual addition of complex numbers in the same way as taking maximum is a
dequantization of the usual addition of positive real numbers.

\begin{proof}[{\bfseries Proof of Theorem \ref{th-deq}}]
In order to prove the equality 
$$\GG_{\cplus}\times\{0\}=\Cl(\GG)\cap\C^3\times\{0\},$$
that is the content of Theorem \ref{th-deq}, we will
prove the corresponding two inclusions:
\begin{equation}\label{incl1}
\GG_{\cplus}\times\{0\}\subset\Cl(\GG)\cap\C^3\times\{0\},
\end{equation}
\begin{equation}\label{incl2}
\GG_{\cplus}\times\{0\}\supset\Cl(\GG)\cap\C^3\times\{0\}.
\end{equation}

\noindent{\bfit Proof of \eqref{incl1}.}
There are three types of points in $\GG_{\cplus}\subset\C^3$:
\begin{enumerate} 
\item $(a,b,a)$ with $|a|>|b|$, or $(a,b,b)$ with $|a|<|b|$;
\item $(a,b,c)$ with $|a|=|b|=|c|$ and $a+b\ne0$;
\item $(a,-a,b)$ with $|b|\le|a|$. 
\end{enumerate}
In the first case, if $|a|>|b|$, then $a=a+_0b$, hence $(a,b,a)$ 
belongs to the graph of $+_0$, and 
therefore $(a,b,a,0)$ belongs to the closure of $\GG$.
If $|a|<|b|$, then $b=a+_0b$, hence $(a,b,b)$ 
belongs to the graph of $+_0$, and 
therefore $(a,b,b,0)$ belongs to the closure of $\GG$.

In the second case, let $|a|=|b|=|c|=r$. Recall that $c$ belongs 
to the shortest arc connecting $a$ and $b$ on the circle $|z|=r$.
Therefore $c=\Gl a+\Gm b$ with $\Gl,\Gm\in[0,1]$. 

Assume that $c\ne a,b$. From this assumption, it follows
that $0<\Gl,\Gm<1$. Let us prove, first,  that
$c=(\Gl^ha)+_h(\Gm^hb)$. 

Indeed, 
$|S_h(\Gl^ha)|=|\Gl^ha|^{\frac1h}=\Gl |a|^{\frac1h}$ and $S_h(\Gl^ha)=
\Gl\frac{a}{|a|}|a|^{\frac1h}=\Gl ar^{\frac{1-h}h}$. Similarly, 
$S_h(\Gm^hb)=\Gm\frac{b}{|b|}|b|^{\frac1h}=\Gm br^{\frac{1-h}h}$
and $(\Gl^ha)+_h(\Gm^hb)=S_h^{-1}(S_h(\Gl^ha)+S_h(\Gm^hb))=
S_h^{-1}(r^{\frac{1-h}h}(c))=c$.

Thus $(\Gl^ha,\Gm^hb,c,h)\in\GG$. Since  $\lim_{h\to0}x^h=1$  for any 
$x\in(0,1)$,  
$$(a,b,c,0)=\lim_{h\to0}(\Gl^ha,\Gm^hb,c,h)\in\Cl(\GG).$$
Thus each interior point of the arc $(a\cplus b)\times0$ belongs to
the closure of $\GG$. Therefore, its boundary points belong to the closure
of $\GG$, too.  

Consider finally the last case, $(a,-a,b)$ with $|b|\le|a|$. It would 
suffice to prove that $(a,-a,b,0)$ belongs to the closure of $\GG$ for 
$b$ with $|b|<|a|$. Obviously, $(a+_hb,-a,b,h)$ belongs to $\GG$. Indeed,
$(a+_hb)+_h(-a)=a+_h(-a)+_hb=b$. Further, $\lim_{h\to0}(a+_hb)=a+_0b=a$,
since $|b|<|a|$.\medskip

\noindent{\bfit Proof of \eqref{incl2}.} The Inclusion \eqref{incl2}
follows from the following lemmas.

\begin{lem}\label{lem-d6}
If $(a,b,c,0)\in\Cl\GG$, then $|c|\le\max(|a|,|b|)$.
\end{lem}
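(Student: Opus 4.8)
The plan is to bound $|a+_hb|$ directly in terms of $|a|$, $|b|$ and $h$ by an estimate uniform enough to survive the passage to $h\to0$. The whole statement is essentially a triangle-inequality computation transported through the maps $S_h$.

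First I would record the elementary fact that $S_h$ preserves arguments and raises moduli to the power $1/h$, so that $|S_h(z)|=|z|^{1/h}$ and $|S_h^{-1}(z)|=|z|^h$ for all $z\in\C$. Using the ordinary triangle inequality in $\C$ together with monotonicity of $t\mapsto t^h$ on $\R_+$, this gives, for any $a,b\in\C$ and any $h>0$,
$$
|a+_hb|=\bigl|S_h^{-1}(S_h(a)+S_h(b))\bigr|=|S_h(a)+S_h(b)|^{\,h}
\le\bigl(|S_h(a)|+|S_h(b)|\bigr)^{h}=\bigl(|a|^{1/h}+|b|^{1/h}\bigr)^{h}.
$$
Bounding $|a|^{1/h}+|b|^{1/h}\le 2\,\max(|a|,|b|)^{1/h}$ and again using monotonicity of $t\mapsto t^h$ then yields the key uniform estimate
$$
|a+_hb|\le 2^{\,h}\,\max(|a|,|b|)\qquad\text{for all }a,b\in\C,\ h>0 .
$$

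To conclude, take a sequence $(a_n,b_n,c_n,h_n)\in\GG$ with $h_n>0$ converging to $(a,b,c,0)$; by definition of $\GG$ we have $c_n=a_n+_{h_n}b_n$, so the estimate above gives $|c_n|\le 2^{\,h_n}\max(|a_n|,|b_n|)$. Passing to the limit $n\to\infty$ and using continuity of $z\mapsto|z|$ and of $\max$, together with $2^{\,h_n}\to1$, produces $|c|\le\max(|a|,|b|)$, as claimed. The only point to watch — and it is the only obstacle, a mild one — is precisely that the constant $2^{\,h_n}$ in the estimate depends on $h_n$ alone and not on the varying moduli $|a_n|,|b_n|$, so it really does tend to $1$; apart from that, the argument is a routine limit of a triangle-inequality bound.
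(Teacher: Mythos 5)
Your proof is correct and follows essentially the same route as the paper's: both establish the key estimate $|a+_hb|\le 2^h\max(|a|,|b|)$ by transporting the triangle inequality through $S_h$, and then let $h\to0$. The paper phrases the limit passage in terms of neighborhoods of $(a,b)$ and a threshold $\Ge$ rather than sequences, but this is a cosmetic difference.
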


\begin{lem}\label{lem-d-7}
If $(a,b,c,0)\in\Cl\GG$ with $|a|>|b|$, then
$c=a$.
\end{lem}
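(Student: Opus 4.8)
\textbf{Proof proposal for Lemma~\ref{lem-d-7}.} The plan is to turn membership in $\Cl\GG$ into a statement about sequences and then control $+_h$ explicitly in the regime where one summand dominates in modulus. Since $\C^3\times\R_+$ is metrizable, from $(a,b,c,0)\in\Cl\GG$ I would extract $h_n\to 0^+$ and complex numbers $a_n\to a$, $b_n\to b$ with $a_n+_{h_n}b_n\to c$. From $|a|>|b|\ge 0$ it follows that $|a|>0$, hence $a\ne 0$ and $a_n\ne 0$ for all large $n$; moreover, since $|b_n|/|a_n|\to |b|/|a|<1$, there are $q\in(0,1)$ and $N$ with $|b_n|/|a_n|\le q$ for all $n\ge N$.

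For $n\ge N$ put $\epsilon_n:=S_{h_n}(b_n)/S_{h_n}(a_n)$, with the convention $\epsilon_n:=0$ when $b_n=0$. Then $|\epsilon_n|=(|b_n|/|a_n|)^{1/h_n}\le q^{1/h_n}\to 0$, so $\epsilon_n\to 0$; in particular $1+\epsilon_n\ne 0$ for large $n$. Factoring $S_{h_n}(a_n)$ out of $S_{h_n}(a_n)+S_{h_n}(b_n)$ and using the definitions of $S_h$ and $S_h^{-1}$, a direct computation gives
\[
a_n+_{h_n}b_n=S_{h_n}^{-1}\bigl(S_{h_n}(a_n)(1+\epsilon_n)\bigr)
=|a_n|\,|1+\epsilon_n|^{h_n}\cdot\frac{a_n}{|a_n|}\cdot\frac{1+\epsilon_n}{|1+\epsilon_n|}.
\]
Now let $n\to\infty$: $|a_n|\to|a|$, $a_n/|a_n|\to a/|a|$, $(1+\epsilon_n)/|1+\epsilon_n|\to 1$, and $|1+\epsilon_n|^{h_n}\to 1$ since $\log|1+\epsilon_n|$ is bounded while $h_n\to 0$. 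Hence $a_n+_{h_n}b_n\to|a|\cdot\frac{a}{|a|}=a$, so $c=a$.

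I do not expect a genuine obstacle here. The only points needing a little care are: passing from closure to sequences (legitimate by metrizability), the trivial bookkeeping when $b_n=0$, and the limit $|1+\epsilon_n|^{h_n}\to 1$, immediate once $\epsilon_n\to 0$. The conceptual heart is the cancellation in the displayed formula: the factor $|a_n|^{1/h_n}$ blows up, but $S_{h_n}^{-1}$ raises the modulus to the power $h_n$ and exactly undoes it --- the same mechanism as in $h\ln(e^{a/h}+e^{b/h})\to\max(a,b)$ from Section~\ref{s9.1}. Incidentally, Lemma~\ref{lem-d-7} together with its mirror image for $|a|<|b|$ settles the inclusion \eqref{incl2} at all points with $|a|\ne|b|$.
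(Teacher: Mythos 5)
Your proof is correct and follows essentially the same route as the paper: both arguments hinge on factoring $S_h(x)$ out of $S_h(x)+S_h(y)$ and observing that the remaining factor $S_h^{-1}(1+\epsilon)$ tends to $1$ because $|\epsilon|=|y/x|^{1/h}\to 0$ when $|y|<|x|$. The paper converts membership in $\Cl\GG$ into a statement via uniform convergence of $x+_hy\to x$ on a region $|x|>R>r>|y|$ (so that one may shrink neighborhoods), while you instead invoke metrizability to extract sequences $a_n\to a$, $b_n\to b$, $h_n\to 0^+$ and compute the limit directly; this sidesteps the need to state uniformity and makes the treatment of the modulus (both bounds) and the argument of $1+\epsilon_n$ fully explicit, which the paper leaves partly implicit. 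The difference is one of packaging rather than substance.
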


\begin{lem}\label{lem-d5}
If $(a,b,c,0)\in\Cl\GG$ and $|a|=|b|$, but $a+b\ne0$, then
$|c|\ge |a|$.
\end{lem}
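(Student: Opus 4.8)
\textbf{Proof proposal for Lemma \ref{lem-d5}.}
The plan is to unwind $(a,b,c,0)\in\Cl\GG$ into a statement about sequences and then estimate $|a+_{h}b|$ from below by hand. So take $a_n\to a$, $b_n\to b$ in $\C$ and $h_n>0$ with $h_n\to0$ such that $a_n+_{h_n}b_n\to c$. Since $|a|=|b|$ and $a+b\ne0$ force $a\ne0$ and $b\ne0$, we may discard finitely many terms and assume $a_n,b_n\ne0$ for all $n$, so $S_{h_n}$ is defined at $a_n,b_n$. The identity I would isolate at the outset is $|S_h^{-1}(w)|=|w|^{h}$, immediate from the formula defining $S_h^{-1}$; applied to $w=S_{h_n}(a_n)+S_{h_n}(b_n)$ it gives
$$|a_n+_{h_n}b_n|=\bigl|S_{h_n}(a_n)+S_{h_n}(b_n)\bigr|^{h_n}.$$

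Next I would bound the right-hand side below. Write $\rho_n=|a_n|$, $\sigma_n=|b_n|$, and $t_n=\Re(a_n\overline{b_n})/(\rho_n\sigma_n)\in[-1,1]$ (the cosine of the angle between $a_n$ and $b_n$; phrasing it this way avoids any branch-of-argument bookkeeping). Since $|S_h(z)|=|z|^{1/h}$ and $S_h$ keeps the argument fixed, expanding the square $|u+v|^2=|u|^2+|v|^2+2\Re(u\overline v)$ and then using $\rho_n^{2/h}+\sigma_n^{2/h}\ge2\rho_n^{1/h}\sigma_n^{1/h}$ gives
$$\bigl|S_h(a_n)+S_h(b_n)\bigr|^{2}=\rho_n^{2/h}+\sigma_n^{2/h}+2\rho_n^{1/h}\sigma_n^{1/h}t_n\ \ge\ 2(1+t_n)\,\rho_n^{1/h}\sigma_n^{1/h}\ \ge\ 0.$$
Raising to the power $h/2$ (valid since both sides are nonnegative and $h>0$) yields $|a_n+_{h_n}b_n|\ge\bigl(2(1+t_n)\bigr)^{h_n/2}\,(\rho_n\sigma_n)^{1/2}$.

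Finally I would let $n\to\infty$. Here $t_n\to t_\infty:=\Re(a\overline b)/(|a||b|)$, and $t_\infty>-1$, because $t_\infty=-1$ would mean $a\overline b$ is a negative real, which under $|a|=|b|$ forces $a=-b$, excluded by $a+b\ne0$. Hence for large $n$ the number $2(1+t_n)$ lies in a fixed compact subinterval of $(0,\infty)$ (at most $4$, eventually at least $1+t_\infty$), so $\bigl(2(1+t_n)\bigr)^{h_n/2}\to1$, while $(\rho_n\sigma_n)^{1/2}\to(|a||b|)^{1/2}=|a|$. Passing to the limit in the last displayed inequality gives $|c|=\lim_n|a_n+_{h_n}b_n|\ge|a|$, which is the assertion. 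The whole argument is a routine estimate; the only essential use of the hypothesis is the step $t_\infty>-1$, which is exactly what prevents the factor $\bigl(2(1+t_n)\bigr)^{h_n/2}$ from degenerating to $0$ in the limit — the very degeneration that lets $|c|$ drop all the way to $0$ in the excluded case $a+b=0$.
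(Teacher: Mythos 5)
Your proof is correct. It takes the same starting identity as the paper's, namely $|a+_hb|=|S_h(a)+S_h(b)|^h$, but then the routes diverge in a way worth noting. The paper's argument (as printed) writes $b=ae^{i\varphi}$ with $|\varphi|<\pi$ and computes $|a+_hb|=|a|\,|1+e^{i\varphi}|^h$ exactly on the diagonal $|a|=|b|$; it does not explicitly handle sequences $(a_n,b_n,h_n)\to(a,b,0)$ with $|a_n|\ne|b_n|$, which a closure statement requires. Your proof fills precisely that gap: the AM--GM step $\rho_n^{2/h}+\sigma_n^{2/h}\ge 2\rho_n^{1/h}\sigma_n^{1/h}$ collapses the off-diagonal dependence and yields the lower bound $|a_n+_{h_n}b_n|\ge\bigl(2(1+t_n)\bigr)^{h_n/2}(\rho_n\sigma_n)^{1/2}$ valid for arbitrary moduli. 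Everything else is routine: $t_n\to t_\infty>-1$ (and your argument that $t_\infty=-1$ with $|a|=|b|$ forces $a=-b$ is correct), so the prefactor is eventually pinned between two positive constants and tends to $1$ as $h_n\to0$, while $(\rho_n\sigma_n)^{1/2}\to|a|$. Net effect: your version is tighter than the paper's, because it delivers a uniform lower bound in a full neighborhood of $(a,b)$ in the $h\to0$ regime, which is what one actually needs to control the closure; the paper's computation, while giving the right heuristic, would need to be supplemented by a similar locality estimate before it constitutes a complete proof of the lemma.
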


\begin{lem}\label{lem-d8}
If $(a,b,c,0)\in\Cl\GG$ and $|a|=|b|$, but
$a+b\ne0$, then $c\in a\R_++b\R_+$.
\end{lem}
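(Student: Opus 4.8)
The plan is to peel the closure condition down to sequences and then exploit the one structural feature that makes everything work: the maps $S_h$ are \emph{radial}, i.e.\ $\arg S_h(z)=\arg z$ for every $z\ne0$. Concretely, $(a,b,c,0)\in\Cl\GG$ means there exist $a_n\to a$, $b_n\to b$, $c_n\to c$ in $\C$ and reals $h_n\to0$ with $h_n>0$ such that $c_n=a_n+_{h_n}b_n$, that is $S_{h_n}(c_n)=S_{h_n}(a_n)+S_{h_n}(b_n)$. Since $|a|=|b|$ and $a+b\ne0$ force $a\ne0\ne b$, we have $a_n,b_n\ne0$ for all large $n$, and we may choose branches so that $\arg a_n\to\arg a$ and $\arg b_n\to\arg b$.

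First I would isolate the elementary planar fact: if $u,v\in\C\sminus\{0\}$ and a choice of their arguments satisfies $|\arg u-\arg v|<\pi$, then $u+v\ne0$ and $\arg(u+v)$ lies in the closed interval bounded by $\arg u$ and $\arg v$. One sees this by rotating so that $\arg u=-\gamma$ and $\arg v=\gamma$ with $\gamma\in[0,\pi/2)$: a short computation gives $\Re(u+v)=(|u|+|v|)\cos\gamma>0$ and $|\Im(u+v)|\le(|u|+|v|)\sin\gamma$, whence $|\arg(u+v)|\le\gamma$.

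Next, because $a\ne-b$, the arguments of $a$ and $b$ may be chosen to differ by some $\theta\in[0,\pi)$; hence for large $n$ the arguments of $a_n$ and $b_n$ differ by less than $\pi$. Applying the planar fact to $u=S_{h_n}(a_n)$ and $v=S_{h_n}(b_n)$, and using radiality of $S_{h_n}$ (which also preserves $\arg c_n$, transferring the conclusion about $\arg(u+v)$ directly to $\arg c_n$), I get $c_n\ne0$ with $\arg c_n$ in the closed interval between $\arg a_n$ and $\arg b_n$. Now I pass to the limit. If $c=0$ there is nothing to prove, since $0\in a\R_++b\R_+$. If $c\ne0$ then $\arg c_n\to\arg c$, and since the interval between $\arg a_n$ and $\arg b_n$ converges to the one between $\arg a$ and $\arg b$, the argument of $c$ lies in the closed angular sector spanned by $a$ and $b$; as $a\R_++b\R_+$ is precisely that closed convex cone, $c\in a\R_++b\R_+$.

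I do not expect a real obstacle here: the argument is short once the radiality of $S_h$ is extracted. The only delicate points are the elementary angular estimate above and some harmless bookkeeping — discarding the finitely many $n$ with $a_n=0$, $b_n=0$, or with $\arg a_n,\arg b_n$ too far apart, and handling $c=0$ separately because $\arg$ is discontinuous at the origin. Combined with Lemmas \ref{lem-d6} and \ref{lem-d5}, this lemma forces $|c|=|a|$ and so places $c$ on the short arc of $\{|z|=|a|\}$ joining $a$ and $b$, which is exactly what inclusion \eqref{incl2} needs in the case $|a|=|b|$, $a+b\ne0$.
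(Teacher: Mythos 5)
Your proof is correct and rests on the same structural fact the paper uses, namely that each $S_h$ is radial ($\arg S_h(z)=\arg z$), but you arrive at the conclusion by a slightly longer route, and you are more careful about the closure than the paper is. Where you invoke the angular estimate ``$\arg(u+v)$ lies between $\arg u$ and $\arg v$,'' the paper's proof instead observes directly that $S_h(a)$ is a \emph{positive real multiple} of $a$ and $S_h(b)$ a positive real multiple of $b$, so $S_h(a)+S_h(b)$ is by definition a nonnegative combination of $a$ and $b$, hence lies in the cone $a\R_++b\R_+$; applying the argument-preserving map $S_h^{-1}$ then keeps it there, since for nonzero points membership in a convex cone with apex at the origin depends only on the argument. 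This avoids your trigonometric computation entirely. On the other hand, the paper's proof only argues for fixed $a,b$ and $h>0$ and never addresses the perturbed sequences $a_n\to a$, $b_n\to b$, $h_n\to0$ that the closure $\Cl\GG$ actually requires; your sequence-based argument, including the observation that the cones $a_n\R_++b_n\R_+$ (or, in your formulation, the angular intervals) converge to $a\R_++b\R_+$, fills that gap explicitly, and your treatment of the degenerate cases ($a_n=0$, $b_n=0$, $c=0$) is the bookkeeping the paper silently skips.
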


\begin{proof}[{\bfseries Proof of Lemma \ref{lem-d6}}]
\begin{multline*}
|a+_hb|=|S_h^{-1}(S_h(a)+S_h(b))|\\
=|S_h(a)+S_h(b)|^h\\
\le (|S_h(a)|+|S_h(b)|)^h\\
\le(2\max(S_h(a),S_h(b))^h\\
=2^h\left(\max(|a|^{\frac1h},|b|^{\frac1h})\right)^h\\
=2^h\max(|a|,|b|)
\end{multline*}
Since $2^h\underset{h\to0}{\to}1$, it follows that for any 
$C>\max(|a|,|b|)$ there exists neighborhoods $U$ and $V$ of $a$ and $b$,
respectively, and a real number $\Ge>0$ such that 
$\sup\{|x|\mid x\in U+_hV\}$ is not greater than $C$ for any $h\in(0,\Ge)$.
\end{proof}

\begin{proof}[{\bfseries Proof of Lemma \ref{lem-d-7}}]
For any complex numbers $x,y$ with $|x|>|y|$, 
\begin{multline*} 
x+_hy=S_h^{-1}(S_h(x)+S_h(y))=\\
S_h^{-1}\left(S_h(x)\left(1+\frac{S_h(y)}{S_h(x)}\right)\right)=
xS_h^{-1}\left(1+\frac{S_h(y)}{S_h(x)}\right)
\end{multline*}
Further, $|\frac{S_h(y)}{S_h(x)}|=\left|\frac{y}x\right|^{\frac1h}$.
Hence $\left|1+\frac{S_h(y)}{S_h(x)}\right|\le
1+\left|\frac{y}x\right|^{\frac1h}$
and 
$$\left|S_h^{-1}\left(1+\frac{S_h(y)}{S_h(x)}\right)\right|=
 \left|1+\frac{S_h(y)}{S_h(x)}\right|^h
\le\left|1+\left|\frac{y}{x}\right|^{\frac1h}\right|^h.$$
The family $|1+a^{\frac1h}|h$ converges to 1 as $h\to0$ uniformly for 
$a\in (0,r)$ if $r<1$. Therefore 
$x+_hy$ converges to $x$ as $h\to0$ uniformly on the set $|x|\le R$ and
$|y|\le r$ if $R$ and $r$ are positive real numbers with $0<r<R$.

If $(a,b,c,0)\in\Cl\GG$ and $|a|>|b|$, then for any neighborhoods $U$, $V$
and $W$ of $a$, $b$ and $c$, respectively, and any $\Ge>0$ there exist 
$h\in(0,\Ge)$ and $(x,y)\in U\times V$ such that $x+_hy\in W$.
Let $R$ and $r$ be real numbers with $|a|>R>r>|b|$.
We may take neighborhoods $U$ and $V$ such that $|y|<r$ ad $R<|x|$
for any $x\in U$ and $y\in V$. When $(x,y)\in U\times V$,  
$x+_hy$ uniformly converges to $x$ as $h\to0$. On the other hand we see
that by shrinking $W$ towards $c$ and pushing $\Ge$ to 0, we force
$x+_hy$ converge to $c$, while by shrinking $U$ towards $a$, we force 
$x$ converge to $a$. Hence $c=a$.  
\end{proof}

\begin{proof}[{\bfseries Proof of Lemma \ref{lem-d5}}]
The numbers $a$ and $b$ can be related by formula $b=ae^{i\Gf}$ with 
$|\Gf|<\pi$. Then $S_h(b)=S_h(ae^{i\Gf})=e^{i\Gf}S_h(a)$
and $|a+_hb|
=|S_h^{-1}(S_h(a)+S_h(b))|
=|S_h^{-1}(S_h(a)(1+e^{i\Gf}))|
=|a||1+e^{i\Gf}|^h$

\end{proof}

\begin{proof}[{\bfseries Proof of Lemma \ref{lem-d8}}]
Fix $h>0$. The numbers $S_h(a)$ and $S_h(b)$ have the same arguments
as $a$ and $b$. Therefore their sum $S_h(a)+S_h(b)$ belongs to
$a\R_++b\R_+$.  The number
$a+_hb=S_h^{-1}(S_h(a)+S_h(b))$ has the same argument as $S_h(a)+S_h(b)$.
Hence, it also belongs to $a\R_++b\R_+$.
\end{proof}

\end{proof}

\subsection{Dequantizations commute}\label{s9.4}
We have constructed the following three 1-parameter families of hyperfields:
\begin{itemize} 
\item $\mftr_h$ degenerating the triangle hyperfield $\mftr$ to the
ultratriangle hyperfield $\mfutr$;
\item $\mfamb_h$ degenerating the amoeba hyperfield $\mfamb$ to the
tropical hyperfield $\mftrop$;
\item $\C_h$ degenerating the field $\C$ of complex numbers to the 
complex tropical hyperfield $\tc$.
\end{itemize}
These families are related. The map $\log:\R_+\to\R\cup\{-\infty\}$
maps the first of them to the second one. This is how the second
deformation was obtained. Furthermore, the map $\C\to\R_+:z\mapsto|z|$ maps
the third deformation onto the first one. The composition of these two 
maps, the amoeba map $\C\to\R\cup\{-\infty\}:z\mapsto\log|z|$, maps 
the third deformation to the second one. 

$$
\begin{CD}\C\cong\C_h@>{h\to0}>>\C_0=\tc\\
             @V{x\mapsto|x|}VV   @VV{x\mapsto|x|}V\\
            \mftr\cong\mftr_h@>>{h\to0}>\mftr_0=\mfutr\\
            @V{x\mapsto\log x}VV   @VV{x\mapsto\log x}V\\
             \mfamb\cong\mfamb_h@>>{h\to0}>\mftrop
\end{CD}
$$

All vertical arrows in this diagram are 
multiring homomorphisms discussed above. The horizontal arrows denote 
passing to limits.

Double distributivity of a hyperfield is not preserved under
dequantization. In the first line of the diagram the original hyperfield 
is a field $\C$. It is doubly distributive. The complex
tropical hyperfield is not (see Section \ref{s6.4}). 
In the second and third lines the original
hyperfields are not doubly distributive (see Section \ref{s5.1}), 
while the dequantized hyperfields are (cf. Sections \ref{s4.5}, \ref{s5.2}
and \ref{s5.3}).  

Each of the hyperfields gives rise to its own algebraic geometry.
The classical complex algebraic geometry corresponds to the left upper
corner of the diagram. The left vertical arrows correspond to 
construction of amoeba for a complex algebraic variety. The bottom 
right corner of the diagram corresponds to the tropical geometry.

The least studied of these algebraic geometries is the one corresponding to
the right upper corner of the diagram. This is the complex tropical
geometry. It occupies an intermediate position between the complex
algebraic geometry and tropical geometry, cf. \cite{Viro_tg}.

\section*{Appendix 1. Proof of Theorem \ref{th1}}\label{sA1}
 Let us prove that 
$(a\cplus b)\cplus c=a\cplus(b\cplus c)$ 
for any complex numbers $a,b,c$. The following list exhausts all possible 
triples of complex numbers:
\begin{enumerate} 
\item the absolute value of one of the numbers, say $a$, is greater than
the absolute values of the other two numbers: $|a|>|b|,|c|$;
\item $|a|=|c|>|b|$; 
\item $|a|=|b|>|c|$ and  
\begin{enumerate} 
\item  either $a\ne-b$,
\item or $a=-b$;
\end{enumerate}
\item $|a|=|b|=|c|$ and
\begin{enumerate}
\item $a+b\ne0\ne b+c$;
\item either $a+b=0$, or $b+c=0$, but not both;
\item $a+b=0=b+c$, but $a\ne0$;
\item $a=b=c=0$.  
\end{enumerate} 
\end{enumerate} 
Let us prove that 
$(a\cplus b)\cplus c=a\cplus(b\cplus c)$ in each of these cases. 
In the framework of the prove in the case when $x\cplus y$ is an arc 
(i.e., $|x|=|y|$ and $x+y\ne0$), let us denote this arc by 
$\mathop\frown{(xy)}$. 

(1) In the first case (i.e., if $|a|>|b|,|c|$) the tropical sum equals $a$,
that is the summand with the greatest absolute value independently on the
order of operations. For any order this summand majorizes the others and
eventually becomes the final result.\qed

(2) If $|a|>|b|$ and $|b|<|c|$, then $a\cplus b=a$ and $b\cplus c=c$. 
Hence $(a\cplus b)\cplus c=a\cplus c$ and $a\cplus(b\tplus c)=a\tplus c$.

(3a) If $|a|=|b|$ and $a\ne-b$, then $a\cplus
b=\mathop\frown{(ab)}$, and since $|c|<|a|$, then $c\cplus x=x$ for any
$x$ with $|x|=|a|$.  Therefore    
$(a\cplus b)\cplus c=(\mathop\frown{(ab)})\cplus c=\mathop\frown(ab)$. 
On the other hand, $a\cplus(b\cplus c)=a\cplus b=\mathop\frown(ab)$.\qed

(3b) 
\begin{multline*}(a\cplus -a)\cplus c=
\{x\mid|x|\le|a|\}\cplus c=\\
\left(\begin{aligned}
&\{x\mid |c|<|x|\le|a|\} \cup\\
&\{x\mid |x|=|c|, x\ne-c\}\cup\\
&\{-c\}\cup\\
&\{x\mid |x|<|c|\}
\end{aligned}
\right)\cplus c=
\left(
\begin{aligned}
&\{x\mid |c|<|x|\le|a|\}\cup\\ 
&\{y\mid |y|=|c|, x\ne-c\}\cup\\
&\{x\mid |x|\le|c|\}\cup\\
&\{c\}
\end{aligned}
\right)=\\
\{x\mid |x|\le|a|\}\end{multline*}
On the other hand,
$a\cplus(-a\cplus c)=a\cplus(-a)= \{x\mid |x|\le |a|\}$
\qed

(4a)
\begin{multline*}
(a\cplus b)\cplus c=(\mathop\frown(ab))\cplus c=\\
\begin{cases}
\{x\mid|x|\le|a|\}, &\text{ if }-c\in (\mathop\frown(ab))\\
(\mathop\frown(ac))\cup(\mathop\frown(bc)), &\text{ if } 
-c\not\in (\mathop\frown(ab))
\end{cases}
\end{multline*}
On the other hand,  
\begin{multline*}
a\cplus(b\cplus c)=a\cplus(\mathop\frown(bc))=\\
\begin{cases} 
\{x\mid|x|\le|a|\}, &\text{ if } -a\in (\mathop\frown(bc))\\
(\mathop\frown(ab))\cup(\mathop\frown(ac)), &\text{ if } 
-a\not\in (\mathop\frown(bc))
\end{cases}
\end{multline*}

The statements $-c\in (\mathop\frown(ab))$ and $-a\in (\mathop\frown(bc))$
are equivalent. Indeed, each of them means that the convex hull of the set 
 $\{a,b,c\}$ contains 
$0$. If the convex hull of $\{a,b,c\}$ does not contain
$0$, then  $\{a,b,c\}$ is contained in a half of the circle 
$\{x\mid |x|=|a|\}$ and then 
$(\mathop\frown(ac))\cup(\mathop\frown(bc))=(\mathop\frown(ab))\cup(\mathop\frown(ac))$
is the shortest arc of the circle containing $a,b,c$, that is it is a sort
of convex hull of $\{a,b,c\}$ in a half-circle. 
\qed

(4b) If $|a|=|b|=|c|$, $a+b=0$, but  $b+c\ne0$, then
$(a\cplus b)\cplus c=
\{x\mid |x|\le|a|\}\cplus c=
=(\{-c\}\cup\{x\mid x\ne-c |x|\le|a|\}=\{x\mid |x|\le|a|\}$.
On the other hand,
$a\cplus(-a\cplus c)=a\cplus(\mathop\frown(-a,c)= \{x\mid|x|\le|a|\}$. \qed

(4c) If $|a|=|b|=|c|\ne0$ and $a+b=0=b+c$, then  
$(a\cplus b)\cplus c=(a\cplus-a)\cplus a=\{x\mid|x|\le|a|\}\cplus a=
\{x\mid |x|\le|a|\}$. On the other hand,
$a\cplus (b\cplus c)=a\cplus(-a\cplus a)=a\cplus\{x|\mid |x|\le|a|\}=
\{x\mid |x|\le|a|\}.$\qed

(4d) Does not require a proof. \qed

\section*{Appendix 2. Proof of Theorem \ref{th-1}}\label{sA2}

For $n=2$ the statement of Theorem \ref{th-1} follows
immediately from the definition of tropical sum. Assume that for all  
$n<k$ the statement is proved and prove it for  $n=k$. 

By the assumption, the tropical sum of the first $k-1$ summands is either
the whole closed disk, and then  
$0\in\Conv(a_1,\dots,a_{k-1})$, or $a_1\cplus\dots\cplus a_{k-1}$ is a
connected subset of a half of the circle. In the former case the sum of all
$k$ summands is the same disk, since  $-a_k\in a_1\cplus\dots\cplus
a_{k-1}$, and $0\in\Conv(a_1,\dots,a_{k})$, since 
$0\in\Conv(a_1,\dots,a_{k-1})$. 

In the latter case there may happen one of the following two mutually
exclusive situations: either  $-a_k\in a_1\cplus\dots\cplus a_{k-1}$, 
and then $a_1\cplus\dots\cplus a_k$ is the
disk, or  $-a_k\not\in a_1\cplus\dots\cplus a_{k-1}$.

In the first situation, the diameter of the disk which connects $a_k$ and 
$-a_k$ meets the chord subtending the arc $a_1\cplus\dots\cplus a_{k-1}$. 
(We do not exclude the case when $a_1\cplus\dots\cplus a_{k-1}$ is a point, 
but just consider a point as a degenerated arc. All the arguments below
have obvious versions for this case.)  The center of the
disk lies on the part of the diameter connecting $a_k$ with the chord
subtending the arc $a_1\cplus\dots\cplus a_{k-1}$. The end points of the
arc are some of the first $k-1$ summands by the induction assumption.
Therefore, $0\in\Conv(a_1,\dots,a_k)$.

In the second situation (that is if  
$-a_k\not\in a_1\cplus\dots\cplus a_{k-1}$) 
either $a_k\in a_1\cplus\dots\cplus a_{k-1}$ and then 
$a_1\cplus\dots\cplus a_{k}= a_1\cplus\dots\cplus a_{k-1}$, so the second 
alternative takes place, or 
$a_k\not\in a_1\cplus\dots\cplus a_{k-1}$ and then $a_1\cplus\dots\cplus
a_{k-1}$ lies 
on one side of the
diameter connecting $a_k$ with 
$-a_k$. In the latter case  $a_1\cplus\dots\cplus a_{k}$ is an arc one of
the end points of which is $a_k$, while the other end point is one of the
end points of the arc
$a_1\cplus\dots\cplus a_{k-1}$.\qed

\section*{Appendix 3. Other tropical additions}\label{sA3}

\subsection*{A3.1 Tropical addition of quaternions}\label{sA3.1}
Denote by $\H$ the skew field of quaternions 
$\{x+y\mathbf{i}+z\mathbf{j}+t\mathbf{k}\mid x,y,z,t\in\R\}$.
Let $a,b\in\H$. 
Like in Section \ref{s6.1} define 
$$
a\cplus b = 
\begin{cases}
\{a\}, &\text{ if }\quad |a|>|b|;\\
\{b\}, &\text{ if }\quad |a|<|b|;\\
\text{\parbox{2.3in}{the set of points on the shortest geodesic arc
connecting  $a$ and $b$ in the sphere $\{c\in\H\mid |c|=|a|\}$,}} 
&\text{ if }
|a|=|b|, \  a+b\ne0\\  
\text{the ball }\{c\in\H \mid|c|\le|a|\}, &\text{ if }\quad a+b=0.   
\end{cases} 
$$
Let us call the set $a\tplus b$ the {\sfit tropical sum\/}
of quaternions $a$ and $b$. 

\begin{Th}\label{th11}
The set $\H$ equipped with the tropical addition is a commutative
multigroup.
\end{Th} 

The proof reproduces almost literally the proof of Theorem \ref{th1}.\qed

It is easy to verify that the quaternion multiplication is distributive
over the tropical addition. Thus we have a {\sfit skew hyperfield.}

\subsection*{A3.2 Vector spaces over $\tc$}\label{sA3.2}
The construction of tropical addition of quaternions is a special case of a
more general construction. In an arbitrary normed vector space  $V$ over 
$\C$, define multivalued operation $(a,b)\mapsto
a\tplus b$:
 $$
a\cplus b = 
\begin{cases}
\{a\}, &\text{ if }\quad |a|>|b|;\\
\{b\}, &\text{ if }\quad |a|<|b|;\\
\Cl\left\{
\dfrac{|a|}{|\Ga a+\Gb b|}(\Ga a+\Gb b)\in V\mid \Ga,\Gb\in\R_{>0}\right\}, 
&\text{ if }
|a|=|b|, \ a+b\ne0\\  
\{c\in V \mid|c|\le|a|\}, &\text{ if }\quad a+b=0.   
\end{cases} 
$$

This operation turns $V$ into a multigroup and satisfies two
kinds of distributivity: $a(v\cplus w)=av\cplus aw$ and 
$av\tplus bv=(a\tplus b)v$ where $a,b\in \C$ and $v,w\in V$. In other
words, $V$ becomes a vector space over  $\tc$ in the sense of the following
definition.

Let $F$ be a hyperfield. A set  $V$ with a multivalued binary operation 
$(v,w)\mapsto v\tplus w$ and with an action 
$(a,v)\mapsto av$, $a\in F$, $v\in V$ 
 of the multiplicative group 
of $F$ is called a {\sfit vector space\/} over $F$ if 
\begin{itemize} 
\item $\tplus$ defines in $V$ a structure of commutative multigroup; 
\item $(ab)v=a(bv)$  for any $a,b\in F$ and $v\in V$;
\item $1v=v$  for any $v\in V$;
\item $a(v\tplus w)=av\tplus aw$ for any $a\in F$ and $v,w\in V$;
\item $(a\tplus b)v=av\tplus bv$ for any $a,b\in F$ and $v\in V$.
\end{itemize} 

Of course, any hyperfield is a vector space over itself. Copies of this
vector space are contained in any vector space over a hyperfield. Indeed,
if $V$ is a vector space over a hyperfield $F$ and $w\in V$, 
then the subset $W=\{aw\mid a\in F\}$ is a vector subspace of $V$ in the
obvious sense, the map  $F\to V:a\mapsto aw$ maps 
$F$ onto $W$ and this map is an isomorphism of vector spaces. 

As in a category of vector spaces over a field, the Cartesian product 
$V\times W$ of vector spaces $V$, $W$ over a hyperfield $F$ is naturally
equipped with structure of vector space over  $F$:
\begin{align*}
(v_1,w_1)\tplus(v_2,w_2)&=\{(v,w)\mid v\in v_1\tplus v_2, \ w\in w_1\tplus
w_2\}\\
a(v,w)&=(av,aw).
\end{align*}

Notice, however that, in contrast to vector spaces over a field, a vector
space over a hyperfield generated by a finite set of its elements is not 
necessarily isomorphic to the Cartesian product of its vector subspaces
each of which is generated by a single element. Indeed, a vector space over 
$\tc$ constructed in the way described above starting from a two-dimensional 
Hilbert space over  $\C$, is not isomorphic to $\tc\times \tc$.   

\subsection*{A3.3 Hyperfields of monomials}\label{sA3.3}
The next example was inspired by  Brett Parker's paper \cite{Parker}, which 
was also motivated by a desire to understand tropical degenerations of
complex structures. 

What if one would apply the construction of Section \ref{s7.8}, 
but taking into account the absolute value of the coefficient in the
monomial of the highest degree? 

Consider the set of monomials  $at^r$ with complex coefficient 
$a\ne0$ and real exponent $r$. Adjoin zero to this set. As a set, this is
 $(\C\sminus0)\times\R\cup\{0\}$. Denote it by $P$ and define in it
arithmetic operations. 

Define multiplication as the usual multiplication of monomials. The set of
non-zero monomials is an abelian group with respect to the multiplication.
This group is naturally isomorphic to the product of the multiplicative
group of non-zero complex numbers by the additive group of all real
numbers.

Define multivalued addition by the following formulas:
$$
\begin{aligned}
&at^r\tplus bt^s=
\begin{cases} 
at^r, &\text{ if } r>s\\
bt^s, &\text{ if } s>r\\
(a+b)t^r, &\text{ if } s=r, a+b\ne0\\
\{ct^u\mid u<r\}\cup\{0\} &\text{ if } s=r, a+b=0,
\end{cases}
\\
&0\tplus x=x.
\end{aligned}
$$
This addition is obviously commutative. The multiplication is distributive 
over it. 
There is neutral element $0$ and for each monomial $x$ there is a unique
$y$ such that $x\tplus y\ni0$. Let us verify associativity. 

If one of the summands is zero, then associativity takes place and the
proof is obvious: 
$(x\tplus0)\tplus y=x\tplus y=x\tplus(0\tplus y)$.

Consider three non-zero monomials, $at^u$, $bt^v$ and $ct^w$. 
The following list represent all possibilities:
\begin{enumerate}
\item the exponents of one of the monomials is greater than the exponents
of the other two monomials,
say, $u>v,w$;
\item  two exponents, say $u$ and $v$, are equal, while the third one is
less, and $a+b\ne0$;
\item  two exponents, say $u$ and $v$, are equal, the third is less, and
$a+b=0$;
\item all the three exponents are equal and either 
\begin{enumerate}
\item  none of the sums $a+b$, $b+c$, $a+b+c$ vanishes;
\item  or the sum of two coefficients vanishes, say $a+b=0$, 
(but $a+b+c\ne0$);
\item or $a+b+c=0$.
\end{enumerate}
\end{enumerate}

Let us prove associativity in each of these cases.

(1) The sum equals the summand with the greatest exponent independently on
the order of operations. For any order this summand is the final result. 
\qed

(2) 
$(at^u\tplus bt^u)\tplus ct^w=(a+b)t^u\tplus ct^w=(a+b)t^u$, on the other
hand, $at^u\tplus(bt^u\tplus ct^w)=at^u\tplus bt^u=(a+b)t^u$.\qed

(3) 
\begin{multline*}(at^u\tplus -at^u)\tplus ct^w=
(\{xt^r\mid r<u\}\cup\{0\})\tplus ct^w=\\
\left(\begin{aligned}
&\{xt^r\mid w<r<u\} \cup\\
&\{xt^r\mid r=w, x\ne-c\}\cup\\
&\{-ct^w\}\cup\\
&\{xt^r\mid r<w\}\cup\{0\}
\end{aligned}
\right)\tplus ct^w=
\left(
\begin{aligned}
&\{xt^r\mid w<r<u\}\cup\\ 
&\{yt^w\mid y\ne0,y\ne c\}\cup\\
&\{xt^r\mid r<w\}\cup\{0\}\cup\\
&\{ct^w\}
\end{aligned}
\right)=\\
\{xt^r\mid r<u\}\cup\{0\}\end{multline*}
On the other hand,
$$
at^u\tplus(-at^u\tplus ct^w)=at^u\tplus(-at^u)= \{xt^r\mid r<u\}\cup\{0\}
$$\qed

(4a)
$(at^u\tplus bt^u)\tplus ct^u=(a+b)t^u\tplus ct^u=(a+b+c)t^u$ and 
$at^u\tplus(bt^u\tplus ct^u)=at^u\tplus(b+c)t^u=(a+b+c)t^u$.\qed

(4b) If $a+b=0$, and none of the sums  $b+c$, $a+b+c$ vanishes, then
$(at^u\tplus -at^u)\tplus ct^u=
(\{xt^r\mid r<u\}\cup\{0\})\tplus ct^u=
=ct^u$
On the other hand,
$at^u\tplus(-at^u\tplus ct^u)=at^u\tplus(-a+c)t^u= ct^u$. \qed

(4c) If all three exponents equal and $a+b+c=0$, then  
$$(at^u\tplus bt^u)\tplus ct^u=(a+b)t^u\tplus ct^u=(-c)t^u\tplus ct^u=
\{xt^r\mid r<u\}\cup\{0\}$$, 
on the other hand,
$$at^u\tplus (bt^u\tplus ct^u)=at^u\tplus(b+c)t^u=at^u\tplus(-a)t^u=
\{xt^r\mid r<u\}\cup\{0\}.$$\qed

\noindent{\bf Remark. } There are numerous variants of this construction. 
For example, in the definition of the addition of monomials all the
inequalities can be reverted. Another opportunity for modification: 
restrict consideration
to monomials whose exponents take only rational or integer values. More
generally, exponents can be taken from any linearly ordered abelian group.

\subsection*{A3.4 Tropical addition of $p$-adic numbers.}\label{sA3.4} 
Construction of Section \ref{s7.8} admits a modification 
applicable to any field with a non-archimedian norm.
In any such field one can define a multivalued addition which together with
the original multiplication form a structure of hyperfield. Below this
scheme is realized only in the case of field of $p$-adic numbers. The
general case will be considered elsewhere. 

Recall that a $p$-adic number can be defined as series 
$$
\sum_{n=-v(a)}^{\infty}a_np^n,
$$
where  $a_n$ takes values in the set of integers from the interval 
$[0,p-1]$ and $a_{-v(a)}\ne0$. Define a multivalued sum of $p$-adic numbers 
$a=\sum_{n=-v(a)}^{\infty}a_np^n$ and $b=\sum_{n=-v(b)}^{\infty}b_np^n$ 
via the following formula:
\begin{equation}\label{eqP-ad}
a\tplus b=
\begin{cases} 
a, &\text{ if } v(a)>v(b);\\
b, &\text{ if } v(b)>v(a);\\
a+b, &\text{ if } v(a)=v(b), \ a_{-v(a)}+b_{-v(b)}\ne p;\\
\{x\mid v(x)<v(a)\}, &\text{ if } v(a)=v(b), \ a_{-v(a)}+b_{-v(b)}=p.
\end{cases}
\end{equation}
Exactly as in the preceding Section, one can prove that this binary
multivalued operation is associative and, together with the usual
multiplication, gives rise to a structure of multivalued field in the set
of $p$-adic numbers.

\end{document}